\documentclass[12pt,a4paper]{article}

\usepackage[T1]{fontenc}       
\usepackage[utf8]{inputenc}

\usepackage{microtype}

\usepackage[margin=2.5cm]{geometry}

\usepackage{mathtools}    
\usepackage{amssymb}           
\usepackage{amsthm}            
\usepackage{bm}             
\usepackage{mathrsfs}         
\usepackage{dsfont}

\usepackage{graphicx}          
\usepackage{xcolor}            
\usepackage{float}

\usepackage{tikz}
\usetikzlibrary{arrows,calc,positioning,external}

\usepackage[colorlinks, linkcolor=black, citecolor=black, urlcolor=black]{hyperref}

\usepackage[nameinlink,noabbrev]{cleveref}

\usepackage{caption}
\usepackage{subcaption}
\usepackage{wrapfig}

\usepackage[shortlabels]{enumitem}

\usepackage{authblk}

\usepackage[backend=biber, style=alphabetic, maxbibnames=40, doi=false, isbn=false, url=false, eprint=false]{biblatex}
\addbibresource{bibliography.bib}

\numberwithin{equation}{section}

\theoremstyle{plain}
\newtheorem{theorem}{Theorem}[section]

\newtheorem{lemma}[theorem]{Lemma}
\newtheorem{corollary}[theorem]{Corollary}

\theoremstyle{definition}

\newtheorem{assumption}[theorem]{Assumption}

\newtheorem{remark}[theorem]{Remark}
\newtheorem{example}[theorem]{Example}

\newcommand{\numberset}{\mathbb}
\newcommand{\N}{\numberset{N}}

\newcommand{\R}{\numberset{R}}

\DeclarePairedDelimiter{\abs}{\lvert}{\rvert}

\DeclarePairedDelimiter{\ceil}{\lceil}{\rceil}
\DeclarePairedDelimiter{\tond}{(}{)} 
\DeclarePairedDelimiter{\quadr}{[}{]}
\DeclarePairedDelimiter{\graf}{\{}{\}} 
\DeclarePairedDelimiter{\scal}{\langle}{\rangle}

\DeclareMathOperator{\dist}{d}

\DeclareMathOperator{\law}{law}

\DeclareMathOperator{\poi}{PI}

\DeclareMathOperator{\tph}{T_pH}
\DeclareMathOperator{\deftth}{{T_{2}}H}

\DeclareMathOperator{\deftph}{{T_{p}}H}

\DeclareMathOperator{\wmix}{w_{mix}}

\newcommand{\tmix}{\mathrm{t}_{\mathrm{mix}}}

\newcommand{\eps}{\varepsilon}

\newcommand{\tmixa}[1]{\tmix\tond*{#1}}

\newcommand{\wmixa}[1]{\wmix\tond*{#1}}

\newcommand{\dd}{\,d} %for integrals

\newcommand{\hess}{\nabla^2}

\newcommand{\expo}[1]{\exp\tond*{#1}}

\newcommand{\variwr}[2]{\mathrm{Var}_{#1}\tond*{#2}}

\newcommand{\expe}[1]{\mathbb{E}\quadr*{#1}}

\newcommand{\condexpe}[2]{\mathbb{E}\quadr*{#1 \,\middle|\, #2}}

\newcommand{\kldiv}[2]{\mathcal{H}\tond*{#1\,\middle| \,  #2}}

\newcommand{\pp}{\mathcal{P}}

\newcommand{\relden}[2]{\frac{d#1}{d#2}}

\newcommand{\cpi}[1]{C_{\poi}\tond*{#1}}

\newcommand{\pib}{\boldsymbol{\Pi}}

\newcommand{\defttha}[1]{\deftth\tond*{#1}}

\newcommand{\deftpha}[1]{\deftph\tond*{#1}}
\newcommand{\tpha}[1]{\tph\tond*{#1}}

\newcommand{\cdtth}[1]{C_\mathrm{\tilde{T}_2H}}

\newcommand{\ldsem}{P^{\mathrm{LD}}}
\newcommand{\lmcsem}[1]{P^{\mathrm{LMC_{#1}}}}
\newcommand{\gdsem}[1]{P^{\mathrm{GD_{#1}}}}

\newcommand{\pssem}[1]{P^{\mathrm{PS}_{#1}}}
\newcommand{\gausem}[1]{P^{\mathrm{BM}_{#1}}}
\newcommand{\revgausem}[1]{P^{\mathrm{Rev}_{#1}}}

\newcommand{\mubf}{\pmb{\mu}}
\newcommand{\nubf}{\pmb{\nu}}

\newcommand{\lesiep}{\lesssim_\eps}

\newcommand{\totvar}[2]{\mathrm{TV}\tond*{#1,  #2}}

\newcommand{\hatcpi}[1]{\widehat{C}_{\rm{P}}\tond*{#1}}

\title{Entropy-Wasserstein regularization, defective local concentration and a cutoff criterion beyond non-negative curvature}
\author{Francesco Pedrotti\thanks{\textsf{francesco.pedrotti@stat.math.ethz.ch}} }
\affil{ETH Zürich}

\begin{document}
	
	\maketitle
	
	\begin{abstract}
        Notions of positive curvature have been shown to imply many remarkable properties for Markov processes, in terms, e.g., of regularization effects, functional inequalities, mixing time bounds, and, more recently, the cutoff phenomenon.
        In this work, we are interested in a relaxed variant of Ollivier's coarse Ricci curvature, where a Markov kernel $P$ satisfies only a weaker Wasserstein bound $W_p(\mu P, \nu P) \leq K W_p(\mu,\nu)+M$ for constants $M\ge 0, K\in [0,1], p \ge 1$. 
        Under appropriate additional assumptions on the one-step transition measures $\delta_x P$, we establish (i) a form of local concentration, given by a defective Talagrand inequality, and (ii) an entropy-transport regularization effect.
        We consider as illustrative examples the Langevin dynamics and the Proximal Sampler when the target measure is a log-Lipschitz perturbation of a log-concave measure.
        As an application of the above results, we derive criteria for the occurrence of the cutoff phenomenon in some negatively curved settings.
	\end{abstract}
	
	\begingroup
    \enlargethispage{3\baselineskip}
    \tableofcontents
    \endgroup

	\section{Introduction}

    Let $(\Omega,\dist)$ be a Polish space equipped with its Borel $\sigma$-algebra and consider a Markov process $(X_k)_k$ (either in discrete or continuous time) taking values in $\Omega$.
    Understanding the evolution and ergodicity properties of this process, in a broad sense, is a fundamental problem, which has motivated the development of a variety of different techniques, often drawing inspiration from different fields and thus creating fascinating connections.

    \subsection{Curvature of Markov chains}
    One notable example of this is given by the geometric idea of \emph{curvature}. The seminal observations dating back to Lévy \cite{lev-1951} and Gromov \cite{mil-sch-gro-1986}, about the remarkable properties satisfied by the uniform measure on a positively curved Riemannian manifold, have in fact inspired the development of more general notions of curvature, that apply, e.g., to (some) Markov  processes or to abstract metric measure spaces. We recall in particular the celebrated theories by Bakry--Émery \cite{bak-eme-1985}, by Ollivier \cite{oll-2009}, by Lott--Villani \cite{lot-vil-2009} and Sturm \cite{stu-2006}, and by Erbar--Maas \cite{erb-maa-2012} and Mielke \cite{mie-2013}.
    While different in many ways and typically not comparable, a common feature of these theories is that positive curvature is linked to favorable properties of the reference measure and/or stochastic process, in terms e.g. of concentration of measure, functional inequalities and ergodicity.
    In this work, we are mostly concerned with the \emph{coarse Ricci curvature} by Ollivier  \cite{oll-2009} (and natural variants of it), which has become very popular because of its simple formulation. To define it, recall for $p \ge 1$ the $p$-Wasserstein distance between probability measures $\mu,\nu \in \pp_p\tond*{\Omega}$ with finite $p$-moment:
    \[
        W_p(\mu,\nu) = \inf_{\gamma \in \Gamma(\mu,\nu)} \graf*{\int \dist(x,y)^p \dd \gamma(x,y)}^{\frac1p},
    \]
    where $\Gamma(\mu,\nu)$ is the set of couplings between $\mu,\nu$, i.e. probability measures $\gamma\in \tond*{\Omega \times \Omega}$ whose  marginals are  $\mu$ and  $\nu$.
    Then, a discrete-time Markov chain has coarse Ricci curvature (of order $p$) greater than a constant $1-K$ if the Markov kernel $P$ contracts the $p$-Wasserstein distance, in the sense that for all $x,y \in \Omega$
    \begin{equation}\label{eq:p-coarse-ricci-geq-K}
        W_p(\delta_x P, \delta_y P) \le K \dist(x,y).
    \end{equation}
    This definition is naturally extended to continuous time semigroups $P_t$ by requiring the analogous condition
    \begin{equation}\label{eq:p-coarse-ricci-cts-geq-K}
        W_p(\delta_x P_t, \delta_y P_t) \le e^{-(1-K)t} \dist(x,y)
    \end{equation}
    for any $t\ge 0$.
    \subsection{Examples}
    To make the discussion more concrete, we focus now on some Markov processes in $\R^d$, whose limiting distribution is a probability measure $\pi \propto e^{-U}$ for a potential $U\in C^2(\R^d)$ (as usual, we identify probability measures with their density when they are absolutely continuous with respect to the Lebesgue measure).

    \paragraph{Langevin dynamics}
    The first canonical example that we consider is the \emph{Langevin dynamics}: given a standard $d$-dimensional Brownian motion $(B_t)_t$, it is the solution of the SDE
    \begin{equation} \label{eq:LD}
        dX_t = -\nabla U(X_t) \dd t + \sqrt{2} dB_t,
    \end{equation}
     whose semigroup we denote by $\ldsem_t$. For this process, curvature is very well understood (in most senses, e.g. Ollivier, Bakry--\'Emery, Lott--Sturm--Villani), and admits a remarkably simple characterization: the Ricci curvature of \eqref{eq:LD} is bounded from below by a constant $\alpha>0$ if and only if $\pi$ is $\alpha$-log concave, i.e. $\hess U \succcurlyeq \alpha I_d$. When this happens, as anticipated, several interesting consequences can be derived.
    For example, recall the definition of relative entropy
    \begin{equation}
\kldiv{\mu}{\pi} = \begin{cases}
    \int \log\tond*{\relden{\mu}{\pi}} \mathrm{d}\mu &\text{ if } \mu \ll \pi,
    \\
    + \infty &\text{ otherwise}.
\end{cases} 
	\end{equation}
    Then, if $\pi$ is $\alpha$-log-concave, the following celebrated regularization estimate holds for $\mu,\nu \in \pp_2 \tond*{\R^d}$ and $t>0$ \cite{bob-gen-led-2001}:
    \begin{equation}\label{eq:KL-W-log-conc-reg}
           \kldiv{\mu \ldsem_t}{\nu \ldsem_t} \leq \frac{\alpha e^{-2\alpha t}}{2(1-e^{-2\alpha t})}W_2^2(\mu,\nu).
    \end{equation}

    A second fundamental consequence of log-concavity is the validity of functional inequalities for the local law of \eqref{eq:LD} at any finite time $t$ rather than just for the limiting measure $\pi$. For example, for any $t\ge 0$ and bounded $f\in C^1(\R^d)$ we have the inequality
    \begin{equation}\label{eq:local-poincare-curved}
        P_t f^2- \tond*{P_t f}^2 \leq \frac{1-e^{-2\alpha t}}{\alpha}P_t \abs*{\nabla f}^2,
    \end{equation}
    which classically expresses the fact that the local law $\delta \ldsem_t$ satisfies a Poincaré inequality \cite{bak-gen-led-2014}.

    In this paper our focus is mostly on discrete-time Markov chains, so it is also useful to consider  the Euler--Maruyama discretization of \eqref{eq:LD}, for a small step-size $h>0$, which is given by recursion
	\begin{equation}\label{eq:LMC}
		X_{k+1} = X_k - h\nabla U(X_k) + \sqrt{2}\tond*{B_{(k+1)h}-B_{kh}},
	\end{equation}
	and is typically referred to as \emph{Langevin Monte Carlo algorithm}.
	We write $\gausem{h}$ for the transition kernel corresponding to a Gaussian convolution with covariance $hI_d$, i.e.
	$
    \delta_x \gausem{h} = \mathcal{N}(x,h I_d),
	$
	and $\gdsem{h}$ for the deterministic update corresponding to a gradient descent step for $U$, i.e.
	$
			\delta_x \gdsem{h} = \delta_{x-h\nabla U(x)}.
	$
	With this notation, the one-step transition kernel associated to \eqref{eq:LMC} is given by the composition
	$
			\lmcsem{h} \coloneqq \gdsem{h}\gausem{2h}.
	$
    
    \paragraph{Proximal sampler}
	The second example that we consider is the \emph{Proximal Sampler}, a popular
    discrete-time algorithm for sampling from $\pi$  introduced in \cite{lee-she-tia-2021}. 	We  refer the reader to \cite[Chapter 8]{che-book-2024+} or the recent papers \cite{che-eld-2022, mit-wib-2025, wib-2025} for more details. 
	Given a step size $h>0$, we  consider the following probability $\pib \in \pp\tond*{\R^d \times \R^d}$, whose first marginal is $\pi$:
	\begin{eqnarray}
		\pib(x,y) & \propto &  \exp\tond*{-U(x) - \frac{\abs*{x-y}^2}{2h}}.
	\end{eqnarray}
	The Proximal Sampler with target $\pi$ consists in applying alternating Gibbs sampling to $\pib$.
	More precisely, the Proximal Sampler transition kernel is defined by the composition
	 \[
	\pssem{h} \coloneqq \gausem{h}\revgausem{h},
	\]
	where the first \emph{forward step} corresponds to the Gaussian convolution described before, and the second \emph{backward step} $\revgausem{h}$ is defined by
	\[
	\delta_y \revgausem{h}(x) \propto \exp \tond*{-U(x)-\frac{\abs*{x-y}^2}{2h}}.
	\]
    It turns out that even for the Proximal Sampler there is a strong relation between the log-concavity of $\pi$ and the curvature of the process: in particular, if $\pi$ is $\alpha$-log-concave then \eqref{eq:p-coarse-ricci-geq-K} holds with 
    $K = \frac{1}{\alpha h+1}$.
    As for the corresponding powerful consequences of having positive curvature, a reverse transport-entropy regularization for the Proximal Sampler under log-concavity was proved in \cite{che-che-sal-wib-2022}, while a local Poincaré inequality for the iterates of the algorithm was established in \cite{ped-sal-2025-bis}.

    \subsection{Cutoff phenomenon}
    Among the many applications of non-negative curvature and its classical consequences, in recent years this property has led to a better understanding of a fascinating phase transition, known as the cutoff phenomenon,  observed for many Markov processes in the limit as their size tends to infinity. We refer the reader to \cite{sal-2025-modern-aspects} for a modern introduction and we discuss here only the main ingredients. 
    Suppose that our Markov chain is ergodic with invariant measure $\pi$, and recall the total variation between $\mu$ and $\pi$, as a measure of distance from equilibrium:
    \begin{equation*}
    \totvar{\mu}{\pi}  \coloneqq   \sup_{A\in \mathcal{B}(\Omega)}\abs*{\mu(A)-\pi(A)}.
    \end{equation*} 
    Correspondingly, the \emph{mixing time}
    of the process $(X_k)_{k\ge 0}$ with initialization $\mu_0$  and precision $\varepsilon\in(0,1)$ is defined as
 \begin{equation*}
\label{eq:mix-time}
 \tmix(\mu_0,\varepsilon) \coloneqq  \inf\{k\ge 0\colon \totvar{\mu_k}{\pi}\le\varepsilon \}.
 \end{equation*} 
 The cutoff phenomenon is the observation that, in many situations, for any fixed $\eps \in (0,\frac12)$ the width of the mixing window
 \begin{equation*}
     \wmix(\mu_0, \eps) \coloneqq \tmixa{\mu_0,\eps} -\tmixa{\mu_0,1-\eps}
 \end{equation*}
    is asymptotically much smaller than the mixing time $\tmixa{\eps}$, in the limit as the size of the system described by the Markov chain goes to infinity.
    To be rigorous, this means that in the study of cutoff $\mu_0 = \mu_0^{(n)}$, $\pi = \pi^{(n)}$, and the Markov chain $(X_k^{(n)})_k$ implicitly depend on a size-parameter $n\in \N$, although this dependence is not explicitly written to ease the notation; correspondingly, cutoff is then said to occur if for all $\eps\in(0,\frac12)$
    \begin{equation*}
        \frac{\wmixa{\eps}}{\tmixa{\eps}} \to 0 \qquad \text{ as } n \to \infty.
    \end{equation*}
    The cutoff phenomenon was initially discovered by Aldous, Diaconis and Shahshahani in the context of Markov chains describing card shuffling methods \cite{ald-1983,dia-1996, dia-sha-1981, ald-dia-1986}, and since then has been established for a large variety of different models.
    In spite of the abundance of examples, however, proving cutoff is traditionally a difficult task, because it is complicated to compute the mixing time exactly. More recently, however, a different approach has emerged \cite{sal-2023, sal-2025-modern-aspects}, which aims to provide directly upper bounds on the mixing-window $\wmix(\eps)$ in terms of fundamental statistics and/or parameters of the Markov chain $(X_k)_k$ and its stationary distributions $\pi$.
    For example, \cite{sal-2025-diffusions} proved for the Langevin dynamics that 
    when $\pi$ is $\alpha$-log-concave and $X_0 = x$ is deterministic then
    \begin{equation}\label{eq:mix-wind-non-neg-curv-diffusions}
        \wmix(\delta_x, \eps) \lesiep
        \begin{cases}
        \sqrt{\cpi{\pi} \tmixa{\delta_x,1-\eps}}+\cpi{\pi} &\quad \text{ if } \alpha =0,
        \\
        \frac{1}{\alpha} &\quad \text{ if } \alpha >0,
        \end{cases}
    \end{equation}
    where $\cpi{\pi}$ is the best constant in the Poincaré inequality for $\pi$,
    \[
      \variwr{\pi}{f} \leq \cpi{\pi} \int \abs*{\nabla f}^2 \dd \pi \qquad \text{ for all } f\in C^1(\R^d),
    \]
    and where we use the symbol $\lesiep$ to denote inequalities up to positive multiplicative constants depending only on $\eps$.
    Analogous bounds were then proved also for the Proximal Sampler \cite{ped-sal-2025-bis} and for discrete Markov chains with non-negative Bakry--\'Emery curvature \cite{ped-sal-2025}.
    Besides being interesting on their own, one major advantage of estimates like \eqref{eq:mix-wind-non-neg-curv-diffusions} is that they lead to easily verifiable sufficient conditions for the occurrence of the cutoff phenomenon:  for example, \eqref{eq:mix-wind-non-neg-curv-diffusions} for $\alpha =0$ implies that cutoff happens as soon as  
    \begin{equation}
        \frac{\tmixa{\delta_x, \eps}}{\cpi{\pi}} \to \infty,
    \end{equation}
    which is known as the \emph{product condition} in the theory of mixing times, and was famously conjectured by Peres to be a sufficient condition in predicting cutoff for most ``reasonable'' examples of Markov chains.
    So far, however, fruitful estimates like \eqref{eq:mix-wind-non-neg-curv-diffusions} have been established only in non-negatively curved settings because the arguments crucially rely on some classical consequences of that, notably the validity of the local Poincaré inequality \eqref{eq:local-poincare-curved} in the case of \cite{sal-2023,sal-2025-diffusions,ped-sal-2025}, and the regularization estimate \eqref{eq:KL-W-log-conc-reg} in \cite{ped-sal-2025-bis}.
    
    \subsection{Negatively curved Markov chains}
    Having non-negative curvature is, however, a restrictive condition for a Markov chain, and the main motivation of this work is to investigate whether it is possible to relax Ollivier's condition \eqref{eq:p-coarse-ricci-geq-K} while still deriving analogous consequences.
    In particular, our focus in this work is on Markov chains satisfying Assumption \ref{ass:weak-curv-wp} below: this assumption was also briefly discussed in Ollivier's paper \cite{oll-2009}, where it was referred to as ``positive curvature up to $M$''. While the motivation in \cite{oll-2009} was topological, i.e., to turn positive curvature into an \emph{open} property, in his subsequent survey Ollivier explicitly raised the question of understanding which consequences of \eqref{eq:p-coarse-ricci-geq-K} extend to the more general setting of \eqref{eq:weak-corse-ricci-curvature} below, see \cite[Problem O]{oll-2010}.
    \begin{assumption}\label{ass:weak-curv-wp}
		There exist constants  $K\in [0,1],M \ge 0$ and $p\ge 1$ such that for all $x,y\in \Omega$
		\begin{equation}
				\label{eq:weak-corse-ricci-curvature}
			W_p\tond*{\delta_x P, \delta_y P} \le K \dist(x,y) +M.
		\end{equation}
	\end{assumption}
    \begin{remark}
        As for the coarse Ricci curvature, it is not difficult to check that the above implies in fact that for all $\mu,\nu \in \pp_p\tond*{\Omega}$ and $N\ge 0$
        \[
            W_p \tond*{\mu P^N, \nu P^N} \le K^N W_p(\mu,\nu) + M \frac{1-K^N}{1-K},
        \]  
        where we interpret $\frac{1-K^N}{1-K} = N$ if $K=1$.
    \end{remark}
    While for the Langevin dynamics and the Proximal Sampler non-negative curvature was intimately related to the log-concavity of $\pi$, it turns out that the condition naturally corresponding to \eqref{eq:weak-corse-ricci-curvature} is the following perturbative one.
	\begin{assumption}\label{ass:log-lip-pert-log-conc}
		The potential $U$ decomposes as 
		$	U = V+H$
		where
		$V,H\in C^2(\R^d)$ are such that for some constants $\alpha, L \ge 0$ we have  $\hess V \succcurlyeq \alpha I_d$ and $\abs*{\nabla H} \le L$.
	\end{assumption}
    The family of distributions satisfying Assumption \ref{ass:log-lip-pert-log-conc} is considerably larger than the one of log-concave measures. For example, it includes: (i) potentials $U\in C^2(\R^d)$ whose Hessian $\hess U(x)$ is positive definite only outside of a bounded subset of $\R^d$, and (ii) convolutions of bounded support distributions with a Gaussian measure (cf. the arXiv version of \cite{bri-ped-2024}).

    In order to work with the Euler--Maruyama discretization \eqref{eq:LMC} we also consider the following Assumption for simplicity, which becomes only qualitative in the limit as the step size $h\to 0$ (i.e. $\beta$ does not enter in the quantitative bounds for the Langevin dynamics \eqref{eq:LD}).
	\begin{assumption}\label{ass:log-lip-potential}
		There exists a constant $\beta>0$ such that $\hess V \preccurlyeq \beta I_d$.
	\end{assumption}
    As promised, in the next lemma we discuss the validity of \eqref{eq:weak-corse-ricci-curvature} under the above assumptions.
    \begin{lemma} \label{lem:LMC-PS-weak-curvature}
    Under Assumption \ref{ass:log-lip-pert-log-conc}, for $p\ge 1$ and $\mu,\nu \in \pp_p\tond*{\R^d}$ the following hold.
    \begin{enumerate}
        \item If additionally Assumption \ref{ass:log-lip-potential} holds and $h\le \frac1\beta$, then for the LMC transition kernel
        \begin{equation}
            W_p\tond*{\mu \lmcsem{h}, \nu \lmcsem{h}} \le (1-\alpha h)W_p(\mu,\nu) + 2Lh.
        \end{equation}
        \item For the Proximal Sampler kernel
        \begin{equation}
            W_p\tond*{\mu \pssem{h}, \nu \pssem{h}} \leq \frac{1}{\alpha h + 1}W_p(\mu,\nu) +\frac{2Lh}{\alpha h+1}.
        \end{equation}
    \end{enumerate}
    \end{lemma}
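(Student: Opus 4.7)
Both transition kernels decompose as compositions of simpler ones. In each case, one factor is a Gaussian convolution ($\gausem{2h}$ for LMC, $\gausem{h}$ for the Proximal Sampler), which is $1$-Lipschitz in $W_p$ for every $p \ge 1$: this follows from coupling two input distributions optimally and then adding the \emph{same} Gaussian increment. Hence it suffices to bound the remaining factor in each composition: the deterministic gradient step $\gdsem{h}$ for LMC, and the backward kernel $\revgausem{h}$ for the Proximal Sampler.

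\textbf{Part (i): LMC.} Set $T(x) = x - h\nabla U(x)$. My plan is to prove the pointwise estimate $|T(x) - T(y)| \le (1-\alpha h)|x-y| + 2Lh$ and then upgrade it to $W_p$. Writing
\[
T(x) - T(y) = \bigl[(x-y) - h(\nabla V(x) - \nabla V(y))\bigr] - h\bigl(\nabla H(x) - \nabla H(y)\bigr),
\]
the first bracket equals $\int_0^1 (I_d - h\hess V(y+s(x-y)))(x-y)\,ds$; since $\alpha I_d \preccurlyeq \hess V \preccurlyeq \beta I_d$ and $h \le 1/\beta$, the matrix $I_d - h\hess V$ is positive semidefinite with operator norm at most $1-\alpha h$, giving the bound $(1-\alpha h)|x-y|$. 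The second summand is controlled by $2Lh$ via $|\nabla H| \le L$. Pushing any optimal coupling of $(\mu,\nu)$ through $T\times T$ produces a coupling of $(\mu\gdsem{h}, \nu\gdsem{h})$, and Minkowski's inequality in $L^p$ lifts the pointwise bound to the claimed $W_p$ estimate.

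\textbf{Part (ii): Proximal Sampler.} I would identify $\delta_y \revgausem{h}$ with the invariant measure of the Langevin SDE
\begin{equation*}
dX_t = -\nabla V(X_t)\,dt - \nabla H(X_t)\,dt - \frac{X_t - y}{h}\,dt + \sqrt{2}\,dB_t,
\end{equation*}
whose drift is the gradient of $-\log(\delta_y\revgausem{h})$, and run two such processes $X_t, X'_t$ with parameters $y,y'$ under the same Brownian motion and the same initial condition (so the noise cancels). Using $\hess V \succcurlyeq \alpha I_d$, $|\nabla H(X_t) - \nabla H(X'_t)| \le 2L$, and Cauchy--Schwarz on the quadratic confinement yields
\begin{equation*}
\frac{d}{dt}|X_t - X'_t|^2 \le -2\tond*{\alpha + \tfrac{1}{h}}|X_t - X'_t|^2 + 2|X_t - X'_t|\tond*{2L + \tfrac{|y-y'|}{h}}.
\end{equation*}
Comparing with the linear ODE $\dot R = -(\alpha + 1/h) R + (2L + |y-y'|/h)$ started at $R(0) = 0$, Gronwall yields the pathwise bound $|X_t - X'_t| \le \frac{|y-y'| + 2Lh}{\alpha h + 1}$ for every $t\ge 0$. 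Letting $t\to\infty$ in the joint law produces a coupling of $(\delta_y\revgausem{h}, \delta_{y'}\revgausem{h})$ concentrated on $\{|u-v| \le (|y-y'|+2Lh)/(\alpha h+1)\}$, which gives the corresponding $W_p$ bound for all $p\ge 1$. Extending from pairs $(y,y')$ to measures $(\mu,\nu)$ via Minkowski and composing with the $1$-Lipschitz forward step $\gausem{h}$ closes the argument.

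\textbf{Main obstacle.} The delicate step is the backward kernel of $\pssem{h}$: the perturbation $H$ enters $\delta_y\revgausem{h}$ in a way that destroys the strong log-concavity exploited in standard proofs of $W_p$-contraction for the Proximal Sampler. The synchronous-coupling/Gronwall approach sidesteps this by producing a \emph{pathwise} bound whose stationary limit naturally splits into the contractive factor $(\alpha h+1)^{-1}$ and the additive correction $2Lh/(\alpha h+1)$, matching the announced estimate.
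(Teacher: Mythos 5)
Your proof is correct. Part (i) follows the same route as the paper: split off the $\nabla H$ term via the triangle inequality and use the contractivity of gradient descent for $\alpha$-strongly convex functions under $h\le 1/\beta$.

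Part (ii) is a genuinely different argument. The paper bounds $W_\infty\bigl(\delta_{y_1}\revgausem{h},\delta_{y_2}\revgausem{h}\bigr)$ by inserting the $\bigl(\alpha+\frac1h\bigr)$-strongly log-concave intermediate measure $\mu(x)\propto\exp\bigl(-V(x)-|x-y_1|^2/(2h)\bigr)$, applying the triangle inequality, and invoking \cite[Corollary 2.3]{khu-maa-ped-2024}, which controls $W_\infty$ between a strongly log-concave measure and a log-Lipschitz perturbation of it; this yields $\frac{L}{\alpha+1/h}+\frac{L+|y_1-y_2|/h}{\alpha+1/h}$, identical to your bound. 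You instead run two synchronously coupled overdamped Langevin diffusions targeting $\delta_y\revgausem{h}$ and $\delta_{y'}\revgausem{h}$ from the same initial point, derive the differential inequality for $|X_t-X'_t|^2$, and extract the coupling at stationarity. Both give the same constants. Your route is more self-contained (no external $W_\infty$ lemma) but carries technical overhead that you should not sweep under the rug: (a) the comparison with the linear ODE for $R(t)=|X_t-X'_t|$ at $R(0)=0$ needs care because $R$ need not be differentiable where it vanishes --- the cleanest fix is to argue via $R^2$ or simply observe that the set $\{R\le (|y-y'|+2Lh)/(\alpha h+1)\}$ is forward-invariant for the differential inequality you wrote; (b) passing to $t\to\infty$ to obtain a coupling of the two stationary laws requires ergodicity of these Langevin diffusions (justified here, since their invariant measures are bounded log-Lipschitz perturbations of strongly log-concave measures, hence satisfy a Poincaré inequality) together with a tightness/Prokhorov extraction for the joint law, noting that the a.s.\ pathwise constraint is closed and hence preserved in the weak limit. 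These points are standard but should be flagged in a complete write-up. The paper's choice to cite the $W_\infty$ perturbation bound outsources exactly this machinery.
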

    \begin{proof}
        \begin{enumerate}
            \item It suffices to observe that if $h\le \frac1\beta$
        \begin{equation*}
        \begin{split}
            & \abs*{x -h \nabla V (x) -h\nabla H(x) -\tond*{ y -h \nabla V (y) -h\nabla H(y)}} 
            \\
            \le & \abs*{x -h \nabla V (x)  -\tond*{ y -h \nabla V (y) }} +2hL
            \\
             \le & (1-\alpha h) \abs*{x-y}+2hL 
        \end{split}
        \end{equation*}
        where the last line uses the well-known contractivity of gradient descent for strongly convex functionals and small enough step-size, cf. e.g. \cite[Lemma 2.2]{alt-tal-2022}.
        \item The forward step is clearly non-negatively curved, i.e. \eqref{eq:p-coarse-ricci-geq-K} holds with $K=1$ for $\gausem{h}$.
		For the backward step, fix $y_1,y_2 \in \R^d$ and consider the 
		probability density 
		\[
				\mu(x) \propto \expo{-V(x) -\frac{\abs*{x-y_1}^2}{2h}}
		\]
		which is $(\alpha + \frac1h)$-log-concave.
		By \cite[Corollary 2.3]{khu-maa-ped-2024} we have
		\begin{equation} \label{eq:curv-rev-step}
        \begin{split}
            	W_\infty\tond*{\delta_{y_1}\revgausem{h},\delta_{y_2}\revgausem{h}} & \leq W_\infty\tond*{\delta_{y_1}\revgausem{h},\mu} + W_\infty\tond*{\mu,\delta_{y_2}\revgausem{h}}
			\\
			& \leq \frac{L}{\alpha+\frac1h}+ \frac{L+\frac{\abs*{y_1-y_2}}{h}}{\alpha+\frac1h},
        \end{split}
		\end{equation}
		and the conclusion easily follows.
        \end{enumerate}
    \end{proof}

    \subsection{Main results and paper organization}
    The first question that we study in Section \ref{sec:def-loc-conc} is concerned with the concentration of measure properties of the iterates $\mu_k = \law(X_k)$ of the Markov chain at a \emph{finite-time}, in the spirit of the local Poincaré inequality \eqref{eq:local-poincare-curved}, but under the weaker Assumption \ref{ass:weak-curv-wp}.
    It turns out that, under the corresponding assumption on the initialization $\mu_0$ and the one-step transition measures $\delta_x P$, the distribution $\mu_k$ satisfies a defective version of Talagrand's transport-entropy inequality, cf. Theorem \ref{thm:local-defective-transport-entropy}. For example, for the Langevin dynamics under Assumption \ref{ass:log-lip-pert-log-conc}, we deduce that for all $\mu \in \pp \tond*{\R^d}, x\in \R^d, t>0$
    \[
        W_2(\mu,\delta_x \ldsem_t) \leq \sqrt{ 2\frac{1-e^{-2\alpha t}}{\alpha}\kldiv{\mu}{\delta_x \ldsem_t}} + \frac{2L\tond*{1-e^{-\alpha t}}}{\alpha}:
    \]
    this is a standard transport-entropy inequality when $L=0$ (corresponding to the positive curvature settings), and includes an extra additive cost in the general case $L>0$.
    
    The second problem that we address is to deduce an analogue of the reverse transport-entropy regularization \eqref{eq:KL-W-log-conc-reg} in the weaker setting of Assumption \ref{ass:weak-curv-wp}. 
    We specialize the discussion to Euclidean dynamics and the case $p=2$: again under the appropriate assumption on the one-step transition measures, we establish a similar regularization effect for the relative entropy, see Theorem \ref{thm:reverse-transport-entropy-discrete-time}.
    For example, again in the simplest case of the Langevin dynamics under Assumption \ref{ass:log-lip-pert-log-conc}, Corollary \ref{cor:kl-w-reg-LD} gives that 
    \begin{equation}
			\kldiv{\mu\ldsem_T}{\nu\ldsem_T} \leq \begin{cases}
				& \frac{\tond*{W_2(\mu,\nu) +2LT}^2}{4T} \quad \text{ if } \alpha =0,
				\\
				&\frac{\alpha}{2\tond*{1-e^{-2\alpha T}}}
				\tond*{e^{-\alpha T}W_2(\mu,\nu)+\frac{2L}{\alpha}\bigl(1-e^{-\alpha T}\bigr)}^2 \quad \text{ if } \alpha>0.
			\end{cases}
		\end{equation}

    Finally, adapting the high-level approach of \cite{ped-sal-2025-bis}, in Section \ref{sec:cutoff-criteria} we apply our results to deduce estimates on the mixing windows and cutoff criteria for the Langevin dynamics and the Proximal Sampler, valid in the negatively curved setting of Assumption \ref{ass:log-lip-pert-log-conc} with $\alpha >0$.
    For example, for \eqref{eq:LD} with deterministic initialization we prove that 
    \begin{equation}
    \begin{split}
            \wmixa{\eps} & \lesiep \cpi{\pi}\tond*{\frac{L^2}{\alpha}+1}+\sqrt{\cpi{\pi}}\tond*{\frac1{\sqrt{\alpha}}+\frac{L}{\alpha}}
            \\
            & \lesiep \expo{\frac{L^2}{\alpha}+\frac{4L}{\sqrt{\alpha}}}\cdot \frac1\alpha\tond*{1+\frac{L^2}{\alpha}+\frac{L}{\sqrt{\alpha}}}.
    \end{split}
    \end{equation}
    Notice in particular that this recovers the second estimate in \eqref{eq:mix-wind-non-neg-curv-diffusions} in the case $L=0<\alpha$, but the case $L>0$ covers a significantly more general setting. 
    Furthermore, in Theorem \ref{thm:cutoff-LD-general} we also enlarge the scope of admissible initializations compared to \cite{sal-2025-diffusions, ped-sal-2025-bis}, which were restricted to Dirac masses and measures $\mu_0$ satisfying a Poincaré inequality: instead, we consider initial measures satisfying a defective transport-entropy inequality, which in particular does not require that $\mu_0$ has connected support.
    While for cutoff applications for concreteness we focus on our two usual examples, we stress that the same approach works more generally for Markov chains to which Theorems \ref{thm:local-defective-transport-entropy} and \ref{thm:reverse-transport-entropy-discrete-time} apply, with straightforward modifications.

    \section{Defective local concentration}
    \label{sec:def-loc-conc}
	In this section, we work in the general setting of  a Polish space $\tond*{\Omega, \dd}$ equipped with its Borel $\sigma$-field.  For $1\le p \le 2$ and a Borel probability measure $\pi \in \pp_p\tond*{\Omega}$, we say  that $\pi$ satisfies a \emph{defective p-transport-entropy inequality} with constants $A, B\ge 0$ (notation: 		$\pi \in \deftpha{A,B}$)  if for all $\nu\in \pp\tond*{\Omega}$
	\begin{equation}
		W_p\tond*{\nu,\pi} \leq \sqrt{2A \kldiv{\nu}{\pi}}+B.
	\end{equation}
		When $B = 0$ in the above, $\pi$ satisfies the usual \emph{$p$-transport-entropy inequality} (notation: $\pi \in \tpha{A}$).
		A few sufficient conditions are known for the validity of $\tpha{C}$, cf. e.g. \cite{ott-vil-2000, dje-gui-wu-2004}, but the defective form of the inequality has received less attention.
		As a warmup and to provide some intuition, we give first a simple sufficient condition in Lemma \ref{lem:def-tth-winfty-close}.
        This is already useful in some situations: for example, if Assumption \ref{ass:log-lip-pert-log-conc} holds with $\alpha >0$, it follows that $\pi\in \defttha{\frac1\alpha, \frac{2L}{\alpha}}$, as it can be seen
        by considering the  $\alpha$-log-concave measure 
        $\mu \propto e^{-V}$, which is such that $\mu\in \defttha{\frac1\alpha}$ and that  $W_\infty(\mu,\pi) \le \frac{L}{\alpha}$ by \cite[Proposition 1.1]{khu-maa-ped-2024}.
		\begin{lemma} \label{lem:def-tth-winfty-close}
			Let $\mu,\pi\in \pp_p\tond*{\Omega}$ be such that $\mu\in\deftpha{C,M}$ for some constants $C,M \ge 0$ and $W_\infty(\mu,\pi)<\infty$.
			Then, $\pi \in\deftpha{C,M+2W_\infty(\mu,\pi)}$.
		\end{lemma}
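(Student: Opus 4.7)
The plan is to combine the triangle inequality for $W_p$ with the data-processing inequality for relative entropy. The obstacle to the naive argument ``$W_p(\nu,\pi)\le W_p(\nu,\mu)+W_p(\mu,\pi)$ and apply the defective inequality for $\mu$ to $\nu$'' is that $W_\infty(\mu,\pi)<\infty$ does not yield any comparison between $\kldiv{\nu}{\mu}$ and $\kldiv{\nu}{\pi}$; indeed $\mu$ and $\pi$ need not even be mutually absolutely continuous. The fix is to replace $\nu$ by a ``processed'' version $\tilde\nu$ that is close to $\nu$ in $W_p$ and whose KL with respect to $\mu$ is dominated by $\kldiv{\nu}{\pi}$.

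Fix $\nu\in\pp_p(\Omega)$ and assume without loss of generality $\kldiv{\nu}{\pi}<\infty$. Set $R:=W_\infty(\mu,\pi)$ and, using the Polish structure, choose a coupling $\gamma\in\Gamma(\mu,\pi)$ concentrated on $\{(x,y):\dist(x,y)\le R\}$ (or, to avoid invoking existence of a $W_\infty$-optimal coupling, an $(R+\eta)$-approximation with $\eta\downarrow 0$ at the end). Disintegrate $\gamma(dx,dy)=K(y,dx)\,\pi(dy)$, yielding a Markov kernel $K$ whose fiber $K(y,\cdot)$ is concentrated on $\{x:\dist(x,y)\le R\}$ for $\pi$-a.e.\ $y$ and, by construction, $\pi K=\mu$. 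Then set $\tilde\nu:=\nu K$. The coupling $K(y,dx)\,\nu(dy)$ witnesses $W_p(\nu,\tilde\nu)\le R$, while the data-processing inequality for KL under $K$ gives $\kldiv{\tilde\nu}{\mu}=\kldiv{\nu K}{\pi K}\le\kldiv{\nu}{\pi}$.

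Applying $\mu\in\deftpha{C,M}$ to $\tilde\nu$ and chaining the triangle inequality twice then yields
$$W_p(\nu,\pi)\le W_p(\nu,\tilde\nu)+W_p(\tilde\nu,\mu)+W_p(\mu,\pi)\le R+\Bigl(\sqrt{2C\,\kldiv{\nu}{\pi}}+M\Bigr)+R,$$
using $W_p(\mu,\pi)\le W_\infty(\mu,\pi)=R$, which is precisely the defining inequality for $\pi\in\deftpha{C,M+2R}$. The only mildly delicate step is constructing $K$ from a (near-)$W_\infty$-optimal coupling, but both the existence of an $\infty$-optimal coupling and disintegration are standard on Polish spaces; otherwise the argument is a clean assembly of triangle inequality plus data processing.
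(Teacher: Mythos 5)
Your argument is essentially identical to the paper's: both construct the auxiliary measure $\tilde\nu$ by pushing $\nu$ through the disintegration kernel of a $W_\infty$-(near-)optimal coupling between $\mu$ and $\pi$, then invoke data-processing for relative entropy and chain the triangle inequality. The only cosmetic differences are the variable naming in the disintegration and your optional $\eta$-approximation to sidestep existence of a $W_\infty$-optimal coupling (which does in fact exist here, as the paper implicitly uses), so the two proofs coincide.
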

		\begin{proof}
			Let $\nu \in \pp\tond*{\Omega}$ such that $\kldiv{\nu}{\pi}<\infty$ and
			$\gamma \in \pp \tond*{\Omega\times \Omega}$ an optimal coupling for $W_\infty\tond*{\pi,\mu}$.
			By the disintegration theorem applied to $\gamma$ \cite{amb-gig-sav-2008}, we can find a  collection of probability measures $\tond*{\mu^x}_{x\in \Omega}$ on $\Omega$ such that (with abuse of notation) $d\gamma(x, y) = d\pi(x)d\mu^x(y)$.
			Consider now the probability measure $\tilde \gamma \in \pp\tond*{\Omega \times \Omega}$ defined by
            $d\tilde \gamma (x,y) = d\nu(x)d\mu^x(y)$.
            Observe that the first marginal of $\tilde \gamma$ is $\nu$, and denote also by $\tilde \nu$ its second marginal. By construction, 
			\[
			W_\infty(\nu,\tilde \nu) \le W_\infty(\mu, \pi). 
			\]
			Furthermore, by the data-processing inequality for the relative entropy 
            \[
            \kldiv{\tilde \nu}{\mu} \le \kldiv{\tilde \gamma}{\gamma} = \kldiv{\nu}{\pi}.
            \]
			Hence, it follows that
			\[
			W_p\tond*{\tilde \nu, \mu} \leq \sqrt{2C \kldiv{\tilde \nu}{\mu}}+M \leq  \sqrt{2C \kldiv{ \nu}{\pi}}+M.
			\]
			The conclusion follows from the triangle inequality and the bound $W_p\le W_\infty$, since
			\begin{align*}
				W_p\tond*{\nu,\pi} \le 			W_\infty\tond*{\nu,\tilde \nu} +			W_p\tond*{\tilde \nu,\mu} +			W_\infty\tond*{\mu,\pi} .
			\end{align*}
		\end{proof}
        \begin{remark}
            This lemma already highlights some important differences between the defective form of the inequality and the standard one, when $p>1$. To see  this, consider for $\eps>0$ the probability measures $\pi_\eps,\mu \in \pp(\R)$ given by $\pi_\eps = \frac12(\delta_{-\eps}+\delta_\eps)$ and $\mu = \delta_0$. Then, $W_\infty\tond*{\mu,\pi_\eps}\le \eps$ and $\mu \in \deftpha{0}$ for every $p\ge 1$, and so by Lemma \ref{lem:def-tth-winfty-close} we deduce that $\pi_\eps \in \deftpha{0,2\eps}$. However, for any $C>0$ and $p>1$ we necessarily have that $\pi_\eps \notin \deftpha{2C}$, since the support of $\pi_\eps$ is disconnected (cf. \cite[Lemma 2.1]{fat-shu-2018}, whose argument is easily adapted to the general case $p>1$); similarly, $\pi_\eps$ does also not satisfy other classical functional inequalities, such as the Poincaré inequality.
            In many situations, including in the study of the cutoff phenomenon in Section \ref{sec:cutoff-criteria}, it is still desirable to think of the measure $\pi_\eps$ as being ``well-concentrated'' when $\eps$ is small, and one important motivation for the introduction of the defective transport-entropy inequalities is to rigorously capture this intuition, despite the failure of some classical functional inequalities.
        \end{remark}
     
        Our focus in this work is more generally on local functional inequalities and the relationship with Assumption \ref{ass:weak-curv-wp}, and in this sense, our main result in this section is   Theorem \ref{thm:local-defective-transport-entropy} below.
        Inspired by earlier work of Marton \cite{mar-1996} and the subsequent developments in \cite{dje-gui-wu-2004, eld-lee-leh-2017}, which were, however, concerned with the classical non-defective transport inequalities, we make the key assumption that the one-step transition kernel satisfies a defective transport-entropy inequality:
        \begin{assumption} \label{ass:one-step-def-tth}
			There exist constants $A,B \ge 0$ and $1\le p \le 2$ such that for all $x\in \Omega$
			\[
			\delta_x P \in \deftpha{A,B}.
			\]
		\end{assumption}
        It turns out that the above, combined with our defective curvature Assumption \ref{ass:weak-curv-wp}, allows us to track the propagation of a defective transport-entropy inequality through the action of the Markov kernel $P$.
		\begin{theorem} \label{thm:local-defective-transport-entropy}
			Under Assumptions \ref{ass:weak-curv-wp} and \ref{ass:one-step-def-tth} (for the same $p$), if $\mu_0 \in \deftpha{J,S}$ then 
			\[
					\mu_0P \in \deftpha{A+K^2J, KS +B+M}.
			\]
			In particular, for $N\ge 1$ by iterating
			\[
					\mu_0 P^N \in \deftpha{K^{2N}J + A\frac{1-K^{2N}}{1-K^2}, K^{N}S+ (B+M)\frac{1-K^{N}}{1-K}},
			\]
            where as usual  we interpret $\frac{1-K^{N}}{1-K} = N$ if $K=1$.
		\end{theorem}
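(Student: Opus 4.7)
My plan is to follow a Marton-style argument (cf.\ \cite{mar-1996, dje-gui-wu-2004}): reverse-disintegrate $\nu$ through the kernel $P$ to couple it with $\mu_0 P$, then combine the one-step transport-entropy bound from Assumption \ref{ass:one-step-def-tth} with the defective contraction of Assumption \ref{ass:weak-curv-wp} and the hypothesis on $\mu_0$. The iterated $P^N$-statement will then reduce to solving the linear recurrences $J_{k+1} = A + K^2 J_k$ and $S_{k+1} = K S_k + B + M$, so I focus on the one-step claim.

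Given $\nu$ with $\kldiv{\nu}{\mu_0 P} < \infty$ and density $f = d\nu/d(\mu_0 P)$, I introduce the joint measure $\tilde\gamma(dx, dy) = f(y)\, \mu_0(dx)\, P(x, dy)$ on $\Omega \times \Omega$. Its second marginal is $\nu$, while its first marginal $\tilde \mu_0$ has density $Pf$ with respect to $\mu_0$. Disintegrating $\tilde\gamma$ along the first coordinate produces probability kernels $Q_x$ with $Q_x \ll \delta_x P$, and a short computation exploiting that $f$ depends only on $y$ gives the chain-rule identity
\[
\kldiv{\nu}{\mu_0 P} = \kldiv{\tilde\mu_0}{\mu_0} + \int \kldiv{Q_x}{\delta_x P}\, d\tilde\mu_0(x).
\]

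I then construct a coupling $(X_0, \tilde X_0, Y_1, X_1)$ by first sampling $(X_0, \tilde X_0)$ from an optimal $W_p$-coupling $\gamma_0$ of $\tilde\mu_0$ and $\mu_0$, then $Y_1 \sim Q_{X_0}$ (so $Y_1 \sim \nu$), and finally $X_1$ so that, conditionally on $(X_0, \tilde X_0)$, the pair $(Y_1, X_1)$ realises an optimal $W_p$-coupling of $Q_{X_0}$ and $\delta_{\tilde X_0} P$ (so that $X_1 \sim \mu_0 P$). Combining Assumption \ref{ass:one-step-def-tth} applied to $\delta_{X_0} P$ with Assumption \ref{ass:weak-curv-wp} via the triangle inequality yields
\[
W_p(Q_{X_0}, \delta_{\tilde X_0} P) \le \sqrt{2A\, \kldiv{Q_{X_0}}{\delta_{X_0} P}} + K\, d(X_0, \tilde X_0) + B + M.
\]
Taking $L^p(\tilde\gamma)$-norms and applying Minkowski, then using Jensen's inequality (concavity of $x \mapsto x^{p/2}$ since $p/2 \le 1$) to move the expectation inside the square root in the entropy term, and using $\mu_0 \in \deftpha{J, S}$ together with the optimality of $\gamma_0$ for the distance term, I expect to reach
\[
W_p(\nu, \mu_0 P) \le \sqrt{2A \int \kldiv{Q_x}{\delta_x P}\, d\tilde\mu_0(x)} + K\sqrt{2J\, \kldiv{\tilde\mu_0}{\mu_0}} + KS + B + M.
\]

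The last step is to collapse the two square roots via the elementary bound $\sqrt{au} + \sqrt{bv} \le \sqrt{(a+b)(u+v)}$ applied with $a = 2A$, $b = 2K^2 J$ and $u, v$ the two entropy terms in the chain-rule identity above; since $u + v = \kldiv{\nu}{\mu_0 P}$, this produces the advertised one-step constants $(A + K^2 J,\, KS + B + M)$, and iterating $N$ times then just requires summing two geometric series. I expect the subtlest ingredient to set up cleanly to be the reverse disintegration $Q_x$, together with the correct pairing of Jensen at the $L^p$ level with a Cauchy--Schwarz-type bound at the $\sqrt{\kldiv{\cdot}{\cdot}}$ level; once these are in place, the rest is bookkeeping.
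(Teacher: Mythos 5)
Your proposal is correct and matches the paper's proof in essentially every step: the reverse-disintegrated joint law (your $\tilde\gamma$, the paper's $\pmb\nu$), the chain-rule decomposition of the relative entropy, the two-stage optimal coupling, the triangle/Minkowski step, Jensen for the concave power $p/2$, the defective transport-entropy inequality for $\mu_0$, and the closing Cauchy--Schwarz collapse of the two square roots are precisely the ingredients used there, and the geometric recursions you cite for passing from one step to $N$ steps are exactly how the iterated bound is obtained. The only cosmetic difference is that you first form a pointwise triangle-inequality bound on $W_p(Q_{X_0},\delta_{\tilde X_0}P)$ and then take $L^p$-norms via Minkowski, whereas the paper applies the triangle inequality directly at the level of $p$-th moments; the two are equivalent.
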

		\begin{proof}
		Write $\mu_1 = \mu_0 P$ and 
		let $ \nu_1 \in \pp\tond*{\Omega}$ such that 
		\[
				\kldiv{ \nu_1}{ \mu_1}<\infty.
		\]
		We need to prove that 
		\[
				W_p\tond*{\nu_1,\mu_1} \leq \sqrt{2(A+K^2J)\kldiv{\nu_1}{\mu_1}} +KS +B+M.
		\]
		Let $\pmb{\mu} \in \pp \tond*{\Omega\times \Omega}$ be the joint law of $(X_0,X_1)$ with $X_0 \sim \mu_0, X_1 \sim \delta_{X_0}P$ (i.e., with abuse of notation, $d\mubf (x,y) = d\mu_0(x)d(\delta_xP)(y)$), so that its marginals are $\mu_0, \mu_1$.
		Define $\pmb{\nu} \in \pp \tond*{\Omega\times \Omega}$ by
		\[
				\relden{\pmb{\nu}}{\pmb{\mu}}(x,y) = \relden{\nu_1}{\mu_1}(y),
		\]
		so that the second marginal of $\pmb{\nu}$ is $\nu_1$ and
		\[
				\kldiv{\pmb{\nu}}{\pmb{\mu}} = \kldiv{\nu_1}{\mu_1}.
		\]
		We denote by $\nu_0$ the first marginal of $\pmb{\nu}$ and by the disintegration theorem
		we write $d\pmb{\nu}(x,y) = d\nu_0(x) d\nu_1^{x}(y)$ for $\nu_1^x \in \pp \tond*{\Omega}$.
		Observe that by the chain rule for the relative entropy
		\begin{equation} \label{eq:kl-decomposition-fol-chain-rule}
			\begin{split}
				\kldiv{\nu_1}{\mu_1}  = \kldiv{\pmb{\nu}}{\pmb{\mu}}
				= \kldiv{\nu_0}{\mu_0} + \int_{\Omega} \kldiv{\nu_1^x}{\delta_xP} \dd \nu_0(x).
			\end{split}
		\end{equation}		
		We construct a coupling $(Y_1,X_1)$ of $(\nu_1, \mu_1)$ as follows.
		First, consider random variables $(Y_0,X_0) \sim (\nu_0,\mu_0)$ optimally coupled for $W_p(\nu_0,\mu_0)$; then, conditionally on $Y_0,X_0$, sample 
		$( Y_1, X_1)$ optimally coupled  for $W_p\tond*{\nu_1^{Y_0},\delta_{X_0}P}$, in a measurable way (which is possible by \cite[Corollary 5.22]{vil-2009}).
		We then have that
		\begin{equation}
			\begin{split}
				& W_p\tond*{\nu_1,\mu_1} \leq \graf*{\expe{\dist\tond*{Y_1,X_1}^p}}^\frac1p
				\\
				 = & \graf*{\expe{\condexpe{\dist\tond*{Y_1,X_1}^p}{X_0,Y_0}}}^\frac1p
				\\
				 = &\graf*{\expe{W_p^p\tond*{\nu_1^{Y_0}, \delta_{X_0}P}}}^\frac1p
				\\
				\leq &  \graf*{\expe{{W_p^p\tond*{\nu_1^{Y_0}, \delta_{Y_0}P} }}}^\frac1p + \graf*{\expe{W_p^p\tond*{\delta_{Y_0}P, \delta_{X_0}P}}}^\frac1p \quad \text{ by the triangle inequality}
				\\
				\leq & \graf*{\expe{\tond*{\sqrt{2A \kldiv{\nu_1^{Y_0}}{\delta_{Y_0}P}} +B}^{p} }}^\frac1p+\graf*{\expe{\tond*{K\dist\tond*{X_0,Y_0}+M}^p}}^\frac1p 	\quad \text{ by Assumptions \ref{ass:one-step-def-tth} and \ref{ass:weak-curv-wp} }
				\\
				 \leq& \sqrt{2A\expe{\kldiv{\nu_1^{Y_0}}{\delta_{Y_0}P}}}+KW_p(\nu_0,\mu_0)+B+M \quad \text{ by Jensen's and triangle inequality}
				\\
				\leq & \sqrt{2A\expe{\kldiv{\nu_1^{Y_0}}{\delta_{Y_0}P}}}+K\graf*{\sqrt{2J\kldiv{\nu_0}{\mu_0}}+S}+B+M \quad \text{ since } \mu_0 \in \deftpha{J,S}
				\\
				 \leq & \sqrt{2(A+K^2J)\graf*{\kldiv{\nu_0}{\mu_0}+\expe{\kldiv{\nu_1^{Y_0}}{\delta_{Y_0}P}}}}+KS +B+M   \quad \text{ by Cauchy--Schwarz }
				\\
				 = &\sqrt{2(A+K^2J)\kldiv{\nubf}{\mubf}} +KS +B+M  \quad \text{ by \eqref{eq:kl-decomposition-fol-chain-rule} }
				\\
				 = &\sqrt{2(A+K^2J)\kldiv{\nu_1}{\mu_1}} +KS +B+M.
			\end{split}
		\end{equation}
        This concludes the proof of the theorem.
		\end{proof}

        We now apply Theorem \ref{thm:local-defective-transport-entropy} to our two model examples, the Langevin dynamics and the Proximal Sampler.
        Before proceeding, we also state the following useful lemma, which we will use later, and whose proof is analogous to the one of \cite[Lemma 2.2]{dje-gui-wu-2004}.
        \begin{lemma}\label{lem:weak-conv-def-tph}
            Consider a sequence of probability measures $(\mu_n)_n$ in $\pp_p(\Omega)$ such that for some constants $A,B\ge 0 $ we have that $\mu_n \in \deftpha{A,B}$.
            If $\mu_n$ converges weakly to $\mu \in \pp_p\tond*{\Omega}$ as $n \to \infty$, then 
            $\mu \in \deftpha{A,B}$.
        \end{lemma}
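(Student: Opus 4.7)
The plan is as follows. I fix an arbitrary test measure $\nu \in \pp_p(\Omega)$; if $\kldiv{\nu}{\mu} = +\infty$ the bound is trivial, so I may assume the relative entropy is finite and write $g := d\nu/d\mu$ for the density. The main idea, following \cite{dje-gui-wu-2004}, is to construct a sequence $\nu_n \to \nu$ ``tied'' to $\mu_n$ in such a way that the defective $p$-transport-entropy inequality for $\mu_n$ can be applied at each $n$ and then passed to the limit via lower semicontinuity.

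For the construction, I would let $\pi_n \in \Gamma(\mu_n, \mu)$ be an optimal coupling for $W_p(\mu_n, \mu)$ and define $\tilde \pi_n \in \pp(\Omega \times \Omega)$ by reweighting the second variable, $d\tilde \pi_n(x,y) := g(y) \, d\pi_n(x,y)$. Then $\tilde \pi_n$ has second marginal $\nu$, and its first marginal $\nu_n$ is absolutely continuous with respect to $\mu_n$ with density $x \mapsto \condexpe{g(Y)}{X=x}$ for $(X,Y) \sim \pi_n$. By the data-processing inequality for the relative entropy applied to the projection onto the first coordinate, and the elementary identity $\kldiv{\tilde \pi_n}{\pi_n} = \int \log g \dd \nu = \kldiv{\nu}{\mu}$, I obtain
\[
\kldiv{\nu_n}{\mu_n} \le \kldiv{\tilde \pi_n}{\pi_n} = \kldiv{\nu}{\mu}.
\]
Applying the defective $p$-transport-entropy inequality for $\mu_n$ with constants $A,B$ then yields
\[
W_p(\nu_n, \mu_n) \le \sqrt{2A \kldiv{\nu_n}{\mu_n}} + B \le \sqrt{2A \kldiv{\nu}{\mu}} + B.
\]

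To conclude, I would verify that $\nu_n$ converges narrowly to $\nu$ and invoke lower semicontinuity of $W_p$ under narrow convergence of both marginals:
\[
W_p(\nu, \mu) \le \liminf_n W_p(\nu_n, \mu_n) \le \sqrt{2A \kldiv{\nu}{\mu}} + B,
\]
which is the desired bound. The main obstacle is establishing the narrow convergence $\nu_n \to \nu$ when $g$ is unbounded: testing against a bounded continuous function $f$ gives $\int f \, d\nu_n = \int f(x) g(y) \, d\pi_n(x,y)$, which should converge to $\int f g \, d\mu = \int f \, d\nu$ as $\pi_n$ concentrates on the diagonal of $\Omega \times \Omega$ (a consequence of $W_p(\mu_n,\mu)\to 0$). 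The standard workaround for the unboundedness of $g$ is to first run the argument with the bounded truncation $g_K := \min(g, K)$ (and the associated normalized probability measure), and then send $K \to \infty$, using the finiteness of $\kldiv{\nu}{\mu}$ together with the uniform $W_p$-control of $\nu_n$ above to handle the residual term.
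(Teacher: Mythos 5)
Your proposal is the natural adaptation of the argument of [dje-gui-wu-2004, Lemma 2.2], which is exactly what the paper indicates (it gives no proof of its own, only this citation), and the key ingredients — push $\nu$ backwards through a near-diagonal coupling of $(\mu_n,\mu)$, apply the data-processing inequality, use the defective inequality for $\mu_n$, and conclude by lower semicontinuity of $W_p$ under narrow convergence together with a truncation in $K$ — are all correctly in place.

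One point worth flagging: you extract ``$W_p(\mu_n,\mu)\to 0$'' from the hypothesis, but narrow convergence alone does not give Wasserstein convergence (one also needs convergence of $p$-th moments). This is harmless for the paper's application (Corollary~\ref{cor:local-defective-transport-entropy-LD}, where the extra moment convergence holds), but if one wants the lemma to hold under plain narrow convergence as stated, the fix is to replace the optimal $W_p$-coupling $\pi_n$ by a Skorokhod coupling of $(\mu_n,\mu)$: one then still has $\int |f(x)-f(y)|\,g_K(y)\,d\pi_n(x,y)\to 0$ by bounded convergence, while lower semicontinuity of $W_p$ along $\pi_n$ (which only requires narrow convergence of both marginals) is unaffected. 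The truncation step should also be spelled out slightly more: with $\tilde\nu_K\propto g_K\,d\mu$ one needs both $\tilde\nu_K\to\nu$ narrowly (dominated convergence) and $\kldiv{\tilde\nu_K}{\mu}\to\kldiv{\nu}{\mu}$ (monotone convergence for $g_K\log g_K + e^{-1}\ge 0$, plus $Z_K\log Z_K\to 0$), after which a second application of lower semicontinuity of $W_p$ in $K$ closes the argument.
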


		\begin{corollary}\label{cor:local-defective-transport-entropy-LD}
			Suppose $\mu_0 \in \defttha{J,S}$. Under Assumptions \ref{ass:log-lip-pert-log-conc} and \ref{ass:log-lip-potential}, for $T>0$ we have
			\[
					\mu_0 \ldsem_T \in \begin{cases}
						\defttha{J+2T, S+2LT} \quad \text{ if } \alpha =0,
						\\
						\defttha{e^{-2\alpha T} J+\frac{1-e^{-2\alpha T}}{\alpha}, e^{-\alpha T}S+2L\frac{1-e^{-\alpha T}}{\alpha} } \quad \text{ if } \alpha >0.
					\end{cases}
			\]
			
		\end{corollary}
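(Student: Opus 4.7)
My plan is to deduce the corollary from Theorem \ref{thm:local-defective-transport-entropy} by applying it to the Euler--Maruyama discretization $\lmcsem{h}$ and then taking the limit $h\to 0$, $N=\lfloor T/h\rfloor$, along which $\mu_0\lmcsem{h}^N$ converges (in $W_2$, hence weakly in $\pp_2$) to $\mu_0 \ldsem_T$. This way the statement follows by combining the discrete iteration bound of Theorem \ref{thm:local-defective-transport-entropy} with the closure property of $\deftth$ under weak convergence provided by Lemma \ref{lem:weak-conv-def-tph}.

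To apply Theorem \ref{thm:local-defective-transport-entropy} to $\lmcsem{h}$ (with $h\le 1/\beta$ so that Assumption \ref{ass:log-lip-potential} gives contractivity of the gradient step), I would check the two hypotheses as follows. Assumption \ref{ass:weak-curv-wp} with $p=2$ is exactly Lemma \ref{lem:LMC-PS-weak-curvature}(1), namely $K = 1-\alpha h$ and $M = 2Lh$. Assumption \ref{ass:one-step-def-tth} holds since the one-step kernel
\[
    \delta_x \lmcsem{h} = \mathcal{N}\bigl(x - h\nabla U(x),\; 2h\, I_d\bigr)
\]
is Gaussian, and the classical Talagrand inequality for the Gaussian \cite{tal-1996} gives $\delta_x \lmcsem{h} \in \defttha{2h,0}$ uniformly in $x$.

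Plugging $J,S$ for $\mu_0$ and iterating $N$ times, Theorem \ref{thm:local-defective-transport-entropy} yields $\mu_0\lmcsem{h}^N \in \defttha{J_{N,h}, S_{N,h}}$ with
\begin{align*}
    J_{N,h} &= (1-\alpha h)^{2N} J + 2h\,\frac{1-(1-\alpha h)^{2N}}{1-(1-\alpha h)^2}, \\
    S_{N,h} &= (1-\alpha h)^N S + 2Lh\,\frac{1-(1-\alpha h)^N}{1-(1-\alpha h)}.
\end{align*}
Letting $h\to 0$ with $Nh\to T$, a direct computation (using $1-(1-\alpha h)^2 = 2\alpha h - \alpha^2 h^2$ and the convention $\frac{1-K^N}{1-K}=N$ when $\alpha =0$) gives that $J_{N,h}$ and $S_{N,h}$ converge exactly to the constants appearing in the statement, in both the cases $\alpha=0$ and $\alpha>0$.

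The last step is to transfer the inequality from $\mu_0\lmcsem{h}^N$ to $\mu_0\ldsem_T$. Under Assumptions \ref{ass:log-lip-pert-log-conc}--\ref{ass:log-lip-potential}, standard strong-error estimates for the Euler--Maruyama discretization of \eqref{eq:LD} (see e.g.\ \cite{dal-2017}) give $W_2(\mu_0\lmcsem{h}^N, \mu_0 \ldsem_T) \to 0$ as $h\to 0$ with $Nh\to T$; in particular $\mu_0\lmcsem{h}^N \to \mu_0 \ldsem_T$ weakly in $\pp_2(\R^d)$. Applying Lemma \ref{lem:weak-conv-def-tph} with the convergent constants $J_{N,h}\to J'$, $S_{N,h}\to S'$ (a minor variant of its statement allows the constants to converge rather than being fixed, and the argument is identical) yields $\mu_0\ldsem_T \in \defttha{J',S'}$, which is the claim. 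The only delicate point is justifying this limiting procedure; once the uniform constants are controlled and the strong-approximation of \eqref{eq:LD} by \eqref{eq:LMC} is invoked, the conclusion is routine.
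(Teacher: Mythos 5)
Your proof is correct and takes essentially the same route as the paper: discretize via \eqref{eq:LMC}, verify Assumptions \ref{ass:weak-curv-wp} and \ref{ass:one-step-def-tth} for $\lmcsem{h}$ (the paper notes the Gaussian one-step law is $\frac1{2h}$-log-concave to get $\defttha{2h,0}$, which is the same content as your appeal to the Gaussian Talagrand inequality), iterate Theorem \ref{thm:local-defective-transport-entropy}, and pass to the limit $N\to\infty$ via Lemma \ref{lem:weak-conv-def-tph}. The only minor difference is that you explicitly flag that Lemma \ref{lem:weak-conv-def-tph} needs the trivial extension to $N$-dependent, convergent constants, and you invoke a strong-error ($W_2$) estimate for weak convergence whereas the paper cites weak-convergence results directly; both are cosmetic.
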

		\begin{proof}
			We work in discrete time, with $N\ge \beta T$  and step-size $h = \frac{T}{N}$ for \eqref{eq:LMC} and we prove the bound in the case $\alpha >0$ (the case $\alpha =0$ follows similarly). 
			Then, since $\delta_x \lmcsem{h} = \mathcal{N}(x-h\nabla U(x), 2h I_d)$ is $\frac{1}{2h}$-log-concave, Assumption \ref{ass:one-step-def-tth} is satisfied with 
			\[
					A = 2h, \quad  B = 0.
			\]
			Applying Theorem \ref{thm:local-defective-transport-entropy} (recall Lemma \ref{lem:LMC-PS-weak-curvature}), we deduce that 
			\[
					\mu_0 \tond*{\lmcsem{h}}^N \in \defttha{A_{T,N}, B_{T,N}}
			\]
			with 
			\begin{align}
				A_{T,N} = \tond*{1-\alpha\frac{T}{N}}^{2N}J + 2\frac{T}{N}\frac{1-\tond*{1-\alpha\frac{T}{N}}^{2N}}{1-\tond*{1-\alpha\frac{T}{N}}^2},
				\\
				B_{T,N} = \tond*{1-\alpha\frac{T}{N}}^{N}S+ 	2L\frac{T}{N}\frac{1-\tond*{1-\alpha\frac{T}{N}}^{N}}{1-\tond*{1-\alpha\frac{T}{N}}}.
			\end{align}
			The conclusion follows by letting $N\to \infty$ by Lemma \ref{lem:weak-conv-def-tph}, since $\mu_0 \tond*{\lmcsem{h}}^N \to \mu_0\ldsem_T$ weakly under Assumption \ref{ass:log-lip-pert-log-conc} and \ref{ass:log-lip-potential}, as can be checked from \cite[Theorem 3.4]{mao-2008} and \cite[Theorem D]{kan-nak-1988}.
			\end{proof}
			
			\begin{corollary} \label{cor:local-defective-th-proximal-sampler}
				Suppose $\mu_0 \in \defttha{J,S}$ and that Assumption \ref{ass:log-lip-pert-log-conc} holds, and let $N\in \N_{\ge 1}$.
                If $\alpha =0$, we have
                \begin{equation*}
                    \mu_0 \tond*{\pssem{h}}^N \in \defttha{J+2Nh, S+6NLh},
                \end{equation*}
                and if $\alpha >0$ we have
                \begin{equation*}
                    \mu_0 \tond*{\pssem{h}}^N \in \defttha{\frac{J}{(1+\alpha h)^{2N}}+\frac{1}{\alpha}\tond*{1-\frac{1}{(1+\alpha h)^{2N}}},\frac{S}{(1+\alpha h)^{N}}+\frac{6L}{\alpha}\tond*{1-\frac{1}{(1+\alpha h)^{N}}}}.
                \end{equation*}
			\end{corollary}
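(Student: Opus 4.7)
The plan is to apply Theorem~\ref{thm:local-defective-transport-entropy} to the Proximal Sampler kernel $\pssem{h}$, in the spirit of the proof of Corollary~\ref{cor:local-defective-transport-entropy-LD}. Assumption~\ref{ass:weak-curv-wp} for $\pssem{h}$ is already provided by Lemma~\ref{lem:LMC-PS-weak-curvature}(2), with $K = \tfrac{1}{1+\alpha h}$ and $M = \tfrac{2Lh}{1+\alpha h}$, so the main task is to verify Assumption~\ref{ass:one-step-def-tth} with $p = 2$, i.e., to produce constants $A, B$ such that $\delta_x \pssem{h} \in \defttha{A,B}$ uniformly in $x \in \R^d$.

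To compute $A$ and $B$, I would exploit the factorization $\pssem{h} = \gausem{h}\revgausem{h}$ and apply Theorem~\ref{thm:local-defective-transport-entropy} to each factor, starting from $\delta_x \in \defttha{0,0}$. The forward kernel $\gausem{h}$ is non-expansive ($K = 1$, $M = 0$) and its one-step measures $\mathcal{N}(y, hI_d)$ satisfy $\tth$ with constant $h$, so $\delta_x \gausem{h} \in \defttha{h, 0}$. For the backward kernel $\revgausem{h}$, the computation at the end of the proof of Lemma~\ref{lem:LMC-PS-weak-curvature}(2) yields curvature constants $K' = \tfrac{1}{1+\alpha h}$ and $M' = \tfrac{2Lh}{1+\alpha h}$; moreover, the one-step law $\delta_y \revgausem{h}$ is a log-Lipschitz perturbation of the $(\alpha + 1/h)$-strongly log-concave measure $\mu_y \propto \exp\tond*{-V(z) - \tfrac{\abs*{z-y}^2}{2h}}$, with $W_\infty\tond*{\mu_y, \delta_y \revgausem{h}} \le \tfrac{Lh}{1+\alpha h}$ by \cite[Proposition~1.1]{khu-maa-ped-2024}, so Lemma~\ref{lem:def-tth-winfty-close} yields $\delta_y \revgausem{h} \in \defttha{\tfrac{h}{1+\alpha h}, \tfrac{2Lh}{1+\alpha h}}$. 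A second application of Theorem~\ref{thm:local-defective-transport-entropy} to $\revgausem{h}$ starting from $\delta_x\gausem{h}$ then gives
\[
\delta_x \pssem{h} \in \defttha{\frac{h(2+\alpha h)}{(1+\alpha h)^2}, \frac{4Lh}{1+\alpha h}}.
\]

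With these $A$ and $B$, the conclusion follows by applying the iterated form of Theorem~\ref{thm:local-defective-transport-entropy} to the kernel $\pssem{h}$ over $N$ steps. The bounds collapse thanks to two clean cancellations: $1 - K^2 = \tfrac{\alpha h(2+\alpha h)}{(1+\alpha h)^2}$ gives $\tfrac{A}{1-K^2} = \tfrac{1}{\alpha}$, and $B + M = \tfrac{6Lh}{1+\alpha h}$ together with $1-K = \tfrac{\alpha h}{1+\alpha h}$ gives $\tfrac{B+M}{1-K} = \tfrac{6L}{\alpha}$; plugging these into the theorem yields exactly the displayed bound in the $\alpha > 0$ case. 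For $\alpha = 0$, the same constants become $K = 1$, $M = 2Lh$, $A = 2h$, $B = 4Lh$, and the convention $\tfrac{1-K^N}{1-K} = N$ gives $J + 2Nh$ and $S + 6NLh$, as claimed. The only real work is the two-substep computation of $(A, B)$; once that is in place, the rest is mechanical algebra fitting into the template of Theorem~\ref{thm:local-defective-transport-entropy}.
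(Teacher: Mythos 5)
Your proposal is correct and follows essentially the same route as the paper: compute $\delta_x \pssem{h} \in \defttha{\frac{h(2+\alpha h)}{(1+\alpha h)^2},\frac{4Lh}{1+\alpha h}}$ by splitting $\pssem{h} = \gausem{h}\revgausem{h}$ and treating the backward step via strong log-concavity plus a $W_\infty$ perturbation through Lemma~\ref{lem:def-tth-winfty-close}, then feed this and Lemma~\ref{lem:LMC-PS-weak-curvature}(2) into the iterated form of Theorem~\ref{thm:local-defective-transport-entropy}; the algebraic simplifications you record ($\tfrac{A}{1-K^2}=\tfrac{1}{\alpha}$, $\tfrac{B+M}{1-K}=\tfrac{6L}{\alpha}$) match the paper's displayed constants. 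The only cosmetic difference is the citation for the $W_\infty$ bound on $\delta_y\revgausem{h}$ (the paper cites Corollary~2.3 of \cite{khu-maa-ped-2024} rather than Proposition~1.1), which does not affect the argument.
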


            \begin{proof}
                Fix $x \in \R^d$. We prove the bound in the case $\alpha >0$ (the case $\alpha =0$ follows similarly). Observe first that 
                $\delta_x\gausem{h} = \mathcal{N}(x,hI_d) \in \defttha{h,0}$.
                Next, for the backward step, we claim that
                \[
                    \delta_x \revgausem{h}(y) \propto \expo{-V(y)-H(y) -\frac{\abs*{y-x}^2}{2h}} \in \defttha{\frac{h}{\alpha h +1}, \frac{2Lh}{\alpha h +1}}.
                \]
                Indeed the probability measure $\nu(y) \propto \expo{-V(y) - \frac{\abs*{y-x}^2}{2h}}$ is such that
                \begin{align}
                    &\nu \in \defttha{\frac{h}{\alpha h +1},0} & \quad \text{ by strong log-concavity},
                    \\
                    & W_\infty(\delta_x \revgausem{h}, \nu) \le \frac{Lh}{\alpha h +1} &\quad\text{ by  \cite[Corollary 2.3]{khu-maa-ped-2024}},
                \end{align}
                and so the claim follows from Lemma \ref{lem:def-tth-winfty-close}.
                Remembering \eqref{eq:curv-rev-step}, we deduce from Theorem \ref{thm:local-defective-transport-entropy} with $\mu_0 = \delta_x \gausem{h}$ and $P = \revgausem{h}$ that
                \[
                        \delta_x \pssem{h}= \delta_x \gausem{h} \revgausem{h} \in \defttha{\frac{h(2+\alpha h)}{(1+\alpha h)^2}, \frac{4Lh}{1+\alpha h}}.
                \]
               Recalling again Lemma \ref{lem:LMC-PS-weak-curvature} we can apply Theorem \ref{thm:local-defective-transport-entropy} to  $P = \pssem{h}$ to deduce that
                \[
                        \mu_0 \tond*{\pssem{h}}^N \in \defttha{\frac{J}{(1+\alpha h)^{2N}}+\frac{1}{\alpha}\tond*{1-\frac{1}{(1+\alpha h)^{2N}}},\frac{S}{(1+\alpha h)^{N}}+\frac{6L}{\alpha}\tond*{1-\frac{1}{(1+\alpha h)^{N}}}}.
                \]
                
            \end{proof}

	\section{Entropy-Wasserstein regularization}
    \label{sec:kl-w-reg}
    In this section, we abandon the general Polish-space setting to focus on Markov kernels on the space $\R^d$, equipped with the Euclidean distance.
    The main goal here is then to investigate whether it is possible to derive reverse transport-entropy inequalities under Assumption \ref{ass:weak-curv-wp}.
    Our analysis is based on the framework developed in \cite{alt-che-2023} (see also the subsequent works \cite{alt-che-2025, alt-che-III-2024, alt-che-zha-2025}), as a generalization of the \emph{Privacy Amplification by Iteration} technique originating in the literature of differential privacy \cite{fel-mir-tal-tha-2018, alt-tal-2022}.
    In particular, following \cite{alt-che-2023}, we consider the following assumption, stating that a reverse transport-entropy inequality holds for the one-step transition kernel $P$.
    	\begin{assumption}\label{ass:kl-reg-one-step}
		There exists a constant $C \ge 0$ such that for all $x,y\in \R^d$
		\begin{equation} \label{eq:kl-w-reg-one-step}
			\kldiv{\delta_x P}{\delta_y P} \leq C{\abs*{x-y}}^2.
		\end{equation}
	\end{assumption}
    The point of the above is that this assumption is often easy to check and holds in some interesting situations, in particular, as we will see, for many stochastic dynamics involving Gaussian noise (as expected from the well-known identity  $\kldiv{\mathcal{N}(x,\sigma^2)}{\mathcal{N}(y,\sigma^2)} = \frac{\abs{x-y}^2}{2\sigma^2}$ for the relative entropy between Gaussian measures with different means).
    The main contribution of \cite{alt-che-2023} was then to show that a lower bound on the coarse Ricci curvature as in \eqref{eq:p-coarse-ricci-geq-K} allows one to propagate \eqref{eq:kl-w-reg-one-step} in a non-trivial way, giving a regularization effect for the iterated kernel $P^N$.
    Our main result in this section is Theorem \ref{thm:reverse-transport-entropy-discrete-time} below, which states that even under the weaker Assumption \ref{ass:weak-curv-wp} it is still possible to deduce a remarkable regularization effect for $P^N$, again in the form of a reverse transport-entropy inequality.
    The proof, as in \cite{alt-che-2023}, is based on coupling two Markov chains $(X_n)_{n=0}^N, (Y_n)_{n=0}^N$ evolving according to the same kernel $P$ but having different initializations $X_0\sim \mu, Y_0 \sim \nu$, and on constructing a third interpolating process $(X'_i)_{i=1}^N$ such that $X'_0 = Y_0 $ and $X'_N = X_N$.
    By the data-processing inequality we have then that (slightly abusing notation)
    \[
        \kldiv{\mu P^N}{ \nu P^N} = \kldiv{X'_N}{Y_N} \leq \kldiv{\tond*{X'_n}_{n=0}^N}{\tond*{Y_n}_{n=0}^N}.
    \]
    The right-hand side is then controlled with the so-called \emph{shifted composition rule}, a variation of the classical chain rule for the relative entropy that exploits the convexity properties of this divergence.
    The difference from \cite{alt-che-2023} is that instead of relying on \eqref{eq:p-coarse-ricci-geq-K} in the design and study of the interpolating process we only use the weaker \eqref{eq:weak-corse-ricci-curvature}, and this later leads us to solving a more involved optimization problem to  choose in the best way the ``shifting parameters'' appearing in the construction.
	\begin{theorem}\label{thm:reverse-transport-entropy-discrete-time}
			Suppose Assumptions \ref{ass:kl-reg-one-step} and \ref{ass:weak-curv-wp} for $p=2$ hold, and let $\mu,\nu\in \pp_2\tond*{\R^d}$ and $N\in \N_{\ge 1}$. 
			Then, if $K=1$
			\begin{equation} \label{eq:rev-w-h-disc-non-neg-curv-weak}
            \begin{split}
                \kldiv{\mu P^N}{\nu P^N} &\leq C\cdot\begin{cases}
					\frac{\tond*{W_2(\mu,\nu)+(N-1)M}^2}{N} \; &\text{ if } W_2(\mu,\nu)\ge M
					\\
					W_2(\mu,\nu)^2+(N-1)M^2 \; &\text{ if } W_2(\mu,\nu)< M
				\end{cases}
                \\
                & \leq  2C\frac{\tond*{W_2(\mu,\nu)+(N-1)M}^2}{N}.
            \end{split}
			\end{equation}
			If $0\le K<1$, for $N\ge 2$
			\begin{equation} \label{eq:rev-w-h-disc-pos-curv-weak}
            \begin{split}
                				\kldiv{\mu P^N}{\nu P^N} &\le C\cdot \begin{cases}
					\frac{1-K^2}{1-K^{2N}} \tond*{K^{N-1}W_2(\mu,\nu)+M\frac{1-K^{N-1}}{1-K}}^2 \; &\text{if } W_2(\mu,\nu)\ge M\frac{K^{N-1}(1+K)}{1+K^{N-1}},
					\\
					W_2^2(\mu,\nu) + M^2 \frac{\tond*{1-K^{N-1}}^2(1-K^2)}{\tond*{1-K^{2N-2}}(1-K)^2}\; &\text{if } W_2(\mu,\nu)< M\frac{K^{N-1}(1+K)}{1+K^{N-1}}.
				\end{cases}
                \\
                & \le 2C\frac{1-K^2}{1-K^{2N}} \tond*{K^{N-1}W_2(\mu,\nu)+M\frac{1-K^{N-1}}{1-K}}^2.
            \end{split}
			\end{equation}
	\end{theorem}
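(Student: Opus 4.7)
I would approach this via the shifted composition framework of Altschuler--Chewi \cite{alt-che-2023}, adapted to accommodate the weaker contraction of Assumption \ref{ass:weak-curv-wp}. The strategy is to couple the two chains through an auxiliary interpolating chain, reduce the KL to a sum of squared shifts via data processing and the chain rule, control the $L^2$ gaps through the weak-curvature recursion, and then solve a constrained quadratic optimization for the shift profile. The main obstacle is the final step.

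Fix $\mu,\nu\in\pp_2\tond*{\R^d}$ and an optimal $W_2$-coupling $(X_0,Y_0)\sim(\mu,\nu)$; write $W := W_2(\mu,\nu)$ and let $X_n,Y_n$ evolve under $P$. I would construct an auxiliary process $(Z_n)_{n=0}^N$ via the identity coupling $Z_0=Y_0$, interpolation parameters $\phi_n\in[0,1]$ to be chosen, and shifted predecessors $\tilde Z_{n-1}:=Z_{n-1}+\phi_n(X_{n-1}-Z_{n-1})$: for $n<N$ I sample $Z_n\sim\delta_{\tilde Z_{n-1}}P$ through a $W_2$-optimal coupling with $X_n\sim\delta_{X_{n-1}}P$, while at the final step I force $\phi_N=1$ (so $\tilde Z_{N-1}=X_{N-1}$) and use identity coupling to obtain $Z_N=X_N$ a.s., which in particular ensures $Z_N\sim\mu P^N$. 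Data processing under projection to the last coordinate, the chain rule for the relative entropy (with joint convexity of $\kldiv{\cdot}{\cdot}$ used to absorb the dependence of $\tilde Z_{n-1}$ on past values of $X$), and Assumption \ref{ass:kl-reg-one-step} together yield
\begin{align*}
\kldiv{\mu P^N}{\nu P^N}\leq \sum_{n=1}^N\expe{\kldiv{\delta_{\tilde Z_{n-1}}P}{\delta_{Z_{n-1}}P}}\leq C\sum_{n=1}^N a_n^2,
\end{align*}
where $a_n:=\phi_n D_{n-1}$ is the $L^2$-size of the $n$th shift and $D_n:=\|X_n-Z_n\|_{L^2}$.

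To control $D_n$ I would condition on $(\tilde Z_{n-1},X_{n-1})$ and apply Assumption \ref{ass:weak-curv-wp} for $p=2$ to the $W_2$-optimal coupling employed at step $n$, obtaining the pointwise bound $\mathbb{E}[|X_n-Z_n|^2 \mid \tilde Z_{n-1},X_{n-1}]^{1/2}\leq K|\tilde Z_{n-1}-X_{n-1}|+M$; Minkowski's inequality then yields the recursion
\begin{align*}
D_n\leq K(D_{n-1}-a_n)+M,\qquad n=1,\ldots,N-1,
\end{align*}
with initial value $D_0=W$, to which we adjoin the terminal condition $a_N=D_{N-1}$ coming from the last-step identity coupling. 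The problem therefore reduces to minimizing $\sum_{n=1}^N a_n^2$ subject to this telescoping recursion and the box constraints $a_n\in[0,D_{n-1}]$.

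The remaining and most delicate step is this constrained quadratic program. Solving the unconstrained relaxation by Lagrange multipliers gives the geometric profile $a_m=K^{N-m}a_N$ with $a_N=c(1-K^2)/(1-K^{2N})$ for $c:=K^{N-1}W+M(1-K^{N-1})/(1-K)$; the resulting optimal value $Cc^2(1-K^2)/(1-K^{2N})$ is precisely the first subcase of \eqref{eq:rev-w-h-disc-pos-curv-weak}. A direct computation verifies that the only nontrivial feasibility constraint, $a_1\leq D_0$, is equivalent after simplification to the stated threshold $W\geq MK^{N-1}(1+K)/(1+K^{N-1})$. When this threshold fails, I would saturate the step-$1$ constraint by setting $a_1=W$ (so that $D_1=M$) and apply the unconstrained optimum to the remaining $(N-1)$-step sub-problem starting from $D_1=M$; this is automatically feasible and yields the second subcase bound of \eqref{eq:rev-w-h-disc-pos-curv-weak}. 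The case $K=1$ giving \eqref{eq:rev-w-h-disc-non-neg-curv-weak} follows by replacing geometric sums by arithmetic ones: the unconstrained optimum becomes the uniform shift $a_n=(W+(N-1)M)/N$ with cost $C(W+(N-1)M)^2/N$, feasible iff $W\geq M$; otherwise the profile $a_1=W$, $a_n=M$ for $n\geq 2$ (which keeps $D_n\equiv M$ for $n=1,\ldots,N-1$) gives cost $C(W^2+(N-1)M^2)$. Finally, the uniform dominating bounds in the second line of each case follow by elementary algebra, using the subcase-2 inequality $W<MK^{N-1}(1+K)/(1+K^{N-1})$ to control the $W^2$ contribution against the geometric-sum expression.
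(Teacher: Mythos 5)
Your proposal follows essentially the same route as the paper: couple the two chains through an interpolating process with shifts, reduce the relative entropy to a weighted sum of squared shift sizes via the shifted composition technique of Altschuler--Chewi, control the $L^2$-gaps by the weak-curvature recursion, and then optimize the shift profile. Two minor presentational differences: you re-derive the shifted-composition inequality from scratch (data processing on the path space, chain rule, joint convexity to integrate out the auxiliary chain), whereas the paper simply cites Theorem~3.1 of \cite{alt-che-2023}; and you parametrize the optimization directly in terms of the shift sizes $a_n=\phi_n D_{n-1}$ rather than the fractions $\eta_n$, so the objective constraint becomes a single linear equality $\sum_n K^{N-n}a_n=c$ and Cauchy--Schwarz/Lagrange multipliers yield the geometric profile. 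Both of these are correct and equivalent to what the paper does.

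The one place where your sketch is materially thinner than what a complete proof requires is the feasibility verification for the unconstrained geometric optimizer. You assert that ``the only nontrivial feasibility constraint is $a_1\le D_0$'' and call it a direct computation, but this claim is where the real combinatorial content lives: one must show that if the geometric profile $a_n=\lambda K^{N-n}$ satisfies $a_1\le D_0$, then it automatically satisfies $a_n\le D_{n-1}$ for all later $n$. This is true, but the cleanest way to see it is essentially the backward dynamic-programming induction the paper carries out in Lemma~\ref{lem:recursive-problem}: saturating the recursion and passing to the $(N-1)$-step sub-problem starting at $D_1\ge M$, one checks that $D_1$ automatically exceeds the sub-problem's threshold $M\frac{K^{N-2}(1+K)}{1+K^{N-2}}$, and that the Lagrange multiplier $\lambda$ is invariant under this reduction, so the geometric profile restricts consistently. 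Without this induction (or an equivalent explicit computation of $D_n-a_{n+1}$), ``only $a_1\le D_0$ is nontrivial'' is an unjustified claim — and indeed the coefficient of $M$ in the expansion of $D_n-a_{n+1}$ can be negative for intermediate $n$, so the feasibility is saved only by the assumed lower bound on $W$, which is not obvious without the inductive structure. On the other hand, since the theorem only needs an upper bound on the relative entropy, your plan for the second regime (saturate $a_1=W$, then apply the unconstrained solution to the remaining $(N-1)$-step problem started from $D_1=M$) is fine as it simply exhibits an admissible profile and computes its cost; you do not strictly need to prove it is the true minimum. In summary: same approach, correct final expressions, but the feasibility argument for the geometric profile needs to be spelled out, and the paper's Lemma~\ref{lem:recursive-problem} induction is the standard way to do it.
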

	\begin{proof}
		We assume $W_2(\mu,\nu)>0$, otherwise there is nothing to prove.		
				Fix $N\in \N_{\ge 1}$ and for $n =0,\ldots, N-1$ consider any numbers $0 \le \eta_n \le 1$ and such that $\eta_{N-1} =1$.
		Analogously to \cite{alt-che-2023}, we construct now two stochastic processes
		$\tond*{X_n}_{n=0}^N, \tond*{X_n'}_{n=0}^N$ iteratively:
		first, we set $X_0\sim \mu, X_0' \sim \nu$ optimally coupled for $W_2(\mu,\nu)$.
		Then, for $n = 0, \ldots, N-1$, conditionally on $X_n, X_n'$, we define $\tilde X_n = (1-\eta_n)X_n' +\eta_n X_n $ and sample $X_{n+1}, X_{n+1}'$
		optimally coupled for
			$W_2\tond*{\delta_{X_n}P, \delta_{\tilde X_n}P}$, in a measurable way \cite[Corollary 5.22]{vil-2009}.
		In what follows,  we write $\mu'_n =\law(X'_n)$, $\mu_n =\law(X_n) = \mu P^n$ and $\nu_n = \nu P^n$: observe that $\mu'_0 = \nu_0$ and $\mu'_N = \mu_N$ since $\eta_{N-1}=1$.
		For $n = 0,\ldots N-1$
		\begin{equation}
			\begin{split}
				\kldiv{\mu'_{n+1}}{\nu_{n+1}} &\le \kldiv{\mu'_{n}}{\nu_{n}}+\expe{\kldiv{\delta_{\tilde X_n} P}{\delta_{X'_n} P}} \quad \text{ by Theorem 3.1 in \cite{alt-che-2023}}
				\\
				& \leq \kldiv{\mu'_{n}}{\nu_{n}}+ C\expe{{\abs*{\tilde X_n- X'_n}}^2}  \quad \text{ by Assumption \ref{ass:kl-reg-one-step}}
				\\
				& = \kldiv{\mu'_{n}}{\nu_{n}}+ C\eta_n^2\expe{\abs*{ X_n- X'_n}^2}.
			\end{split}
		\end{equation}
		Hence, iterating the above
		\begin{equation}\label{eq:upp-bound-kl-sum-squared-shifts}
			\begin{split}
				\kldiv{\mu_N}{\nu_N}  = \kldiv{\mu'_N}{\nu_N}
				& \le  C \sum_{n=0}^{N-1}\eta_n^2\expe{\abs*{ X_n- X'_n}^2}.
			\end{split}
		\end{equation}

		Observe now that for $n\ge 1$
		\begin{equation}
			\begin{split}
				\expe{\abs*{ X_n - X'_n}^2} & = \expe{W_2^2\tond*{\delta_{X_{n-1}}P, \delta_{\tilde X_{n-1}}P}}
				\\
				& \le \expe{\tond*{K\abs*{X_{n-1}-\tilde X_{n-1}}+M}^2} \quad \text{ by Assumption \ref{ass:weak-curv-wp}}
				\\
				& = \expe{\tond*{K\tond*{1-\eta_{n-1}}\abs*{X_{n-1}- X'_{n-1}}+M}^2}
				\\
				& \le \tond*{K\tond*{1-\eta_{n-1}}\sqrt{\expe{\abs*{X_{n-1}-X'_{n-1}}^2}} +M}^2.
			\end{split}
		\end{equation}
		Taking the square root and iterating we deduce that
				\begin{equation}
			\sqrt{\expe{\abs*{X_n-X'_n}^2} } \leq \tond*{W_2(\mu,\nu)K^n \prod_{k=0}^{n-1}\tond*{1-\eta_k} + M \sum_{k=1}^n K^{n-k}\prod_{j=k}^{n-1}\tond*{1-\eta_j}},
		\end{equation}
		with the convention that empty products are equal to $1$.
		Plugging this back in \eqref{eq:upp-bound-kl-sum-squared-shifts} we find that
		\begin{equation}\label{eq:obj-upp-bound-kl}
			\begin{split}
					\kldiv{\mu_N}{\nu_N} &\le C\sum_{n=0}^{N-1}\eta_n^2 \tond*{W_2(\mu,\nu)K^n \prod_{k=0}^{n-1}\tond*{1-\eta_k} + M \sum_{k=1}^n K^{n-k}\prod_{j=k}^{n-1}\tond*{1-\eta_j}}^2.
			\end{split}
		\end{equation}
		The above holds for any shifts $(\eta_n)_{n=0}^{N-1}$ satisfying $0 \le \eta_n\le 1$ and $\eta_{N-1} =1 $, so we can optimize over them to complete the proof: this is done in Lemma \ref{lem:recursive-problem}, since the minimum of objective functional in  
		\eqref{eq:obj-upp-bound-kl} is equal to $C\cdot S(N,W_2(\mu,\nu))$ with the notation of Lemma \ref{lem:recursive-problem}.
	\end{proof}
	
	\begin{corollary}\label{cor:kl-w-reg-LD}
		For the Langevin semigroup, under Assumption \ref{ass:log-lip-pert-log-conc} and \ref{ass:log-lip-potential} for $T>0$ and $\mu,\nu \in \pp_2\tond*{\R^d}$ we have
		\begin{equation}\label{eq:rev-transp-entr-ld}
			\kldiv{\mu\ldsem_T}{\nu\ldsem_T} \leq \begin{cases}
				& \frac{\tond*{W_2(\mu,\nu) +2LT}^2}{4T} \quad \text{ if } \alpha =0,
				\\
				&\frac{\alpha}{2\tond*{1-e^{-2\alpha T}}}
				\tond*{e^{-\alpha T}W_2(\mu,\nu)+\frac{2L}{\alpha}\bigl(1-e^{-\alpha T}\bigr)}^2 \quad \text{ if } \alpha>0.
			\end{cases}
		\end{equation}
	\end{corollary}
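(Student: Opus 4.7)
The plan is to derive the bound by applying Theorem \ref{thm:reverse-transport-entropy-discrete-time} to the Euler--Maruyama discretization $\lmcsem{h}$ of \eqref{eq:LD} and then letting the step size $h \to 0$. Fix $T > 0$, $h \le 1/\beta$, and set $N = T/h$. By Lemma \ref{lem:LMC-PS-weak-curvature}, $\lmcsem{h}$ satisfies Assumption \ref{ass:weak-curv-wp} with $K = 1-\alpha h$ and $M = 2Lh$. For the KL regularity, since $\delta_x \lmcsem{h} = \mathcal{N}(x - h\nabla U(x), 2hI_d)$, the explicit formula for the relative entropy between Gaussians gives
\[
\kldiv{\delta_x \lmcsem{h}}{\delta_y \lmcsem{h}} = \frac{|x - y - h(\nabla U(x) - \nabla U(y))|^2}{4h},
\]
and decomposing $U = V + H$ as in Assumption \ref{ass:log-lip-pert-log-conc}, the contractivity of gradient descent on the $\alpha$-convex $V$ for $h \le 1/\beta$ combined with $|\nabla H| \le L$ yields
\[
|x - y - h(\nabla U(x) - \nabla U(y))| \le (1-\alpha h)|x-y| + 2Lh.
\]

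This bound does not fit Assumption \ref{ass:kl-reg-one-step} literally when $L > 0$, but it plugs directly into the argument of Theorem \ref{thm:reverse-transport-entropy-discrete-time}: in the recursive step for $\kldiv{\mu'_{n+1}}{\nu_{n+1}}$, one replaces the contribution $C|\tilde X_n - X'_n|^2$ by $((1-\alpha h)\eta_n|X_n - X'_n| + 2Lh)^2/(4h)$, with $\tilde X_n = (1-\eta_n)X'_n + \eta_n X_n$. The additive $2Lh$ term has exactly the same form as the $M$-contribution already produced by Assumption \ref{ass:weak-curv-wp} in the recursion for $\sqrt{\expe{|X_n - X'_n|^2}}$, so the objective to be optimized over the shifts $(\eta_n)$ retains the structure handled by Lemma \ref{lem:recursive-problem}. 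This yields the discrete-time bound of the form \eqref{eq:rev-w-h-disc-pos-curv-weak} with the identifications $C = 1/(4h)$, $K = 1-\alpha h$, $M = 2Lh$.

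Letting $N \to \infty$ with $Nh = T$, elementary asymptotics give, for $\alpha > 0$,
\[
K^{N-1} \to e^{-\alpha T}, \qquad M\,\frac{1-K^{N-1}}{1-K} \to \frac{2L}{\alpha}\bigl(1-e^{-\alpha T}\bigr), \qquad \frac{C(1-K^2)}{1-K^{2N}} \to \frac{\alpha}{2(1-e^{-2\alpha T})},
\]
with the corresponding flat limits when $\alpha = 0$. Combined with the weak convergence $\mu \lmcsem{h}^N \to \mu \ldsem_T$ used in the proof of Corollary \ref{cor:local-defective-transport-entropy-LD} and the lower semi-continuity of the relative entropy under weak convergence, this produces the claimed bound. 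The only genuine subtlety of the argument is thus the mismatch between the additive $2Lh$ term appearing in the one-step KL bound and the purely multiplicative form assumed in Assumption \ref{ass:kl-reg-one-step}; its resolution only requires a cosmetic adaptation of the proof of Theorem \ref{thm:reverse-transport-entropy-discrete-time}, since the additional contribution has exactly the structure that the shifted-composition argument already accommodates through $M$.
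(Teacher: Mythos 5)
Your high-level plan (discretize, apply the shifted-composition machinery, let $h \to 0$, use lower semicontinuity of relative entropy) is the right one, and you correctly observe that $\kldiv{\delta_x \lmcsem{h}}{\delta_y \lmcsem{h}} = \frac{\abs{x-y-h(\nabla U(x)-\nabla U(y))}^2}{4h}$ is \emph{not} of the form $C\abs{x-y}^2$ when $L>0$. The gap is in how you propose to handle this. You assert that replacing the term $C\abs{\tilde X_n - X'_n}^2$ by $\frac{1}{4h}\bigl((1-\alpha h)\eta_n\abs{X_n-X'_n}+2Lh\bigr)^2$ ``retains the structure handled by Lemma~\ref{lem:recursive-problem},'' but it does not: after taking the square root of the second moments, the objective to be minimized over the shifts becomes $\sum_{n=0}^{N-1}(K\eta_n A_n + M)^2$ (with $K=1-\alpha h$, $M=2Lh$, and the same recursion $A_{n+1}=K(1-\eta_n)A_n+M$), whereas Lemma~\ref{lem:recursive-problem} solves $\min\sum_{n}\eta_n^2 A_n^2$. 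These are genuinely different optimization problems: for instance, in your objective each summand contributes at least $M^2>0$ even when $\eta_n=0$, so the optimality analysis of Lemma~\ref{lem:recursive-problem} does not carry over, and you would need to prove a modified optimization lemma (and then pass to the limit) to complete the argument. This is not a cosmetic adaptation.

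The paper avoids this entirely with a commutation trick: writing
$(\lmcsem{h})^N = \gdsem{h}\,(\gausem{2h}\gdsem{h})^{N-1}\,\gausem{2h}$,
one drops the final $\gausem{2h}$ by the data-processing inequality and applies Theorem~\ref{thm:reverse-transport-entropy-discrete-time} to the intermediate kernel $P=\gausem{2h}\gdsem{h}$ with initial pair $(\mu\gdsem{h},\nu\gdsem{h})$ and $N-1$ steps. Because the Gaussian acts first, data processing gives
$\kldiv{\delta_x\gausem{2h}\gdsem{h}}{\delta_y\gausem{2h}\gdsem{h}} \le \frac{\abs{x-y}^2}{4h}$,
so Assumption~\ref{ass:kl-reg-one-step} holds \emph{exactly} with $C=\frac{1}{4h}$, while Assumption~\ref{ass:weak-curv-wp} holds with $K=1-\alpha h$, $M=2Lh$; the theorem and Lemma~\ref{lem:recursive-problem} then apply verbatim and the $N\to\infty$ limit matches your asymptotics. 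You should use this interleaving, or else prove a variant of Lemma~\ref{lem:recursive-problem} tailored to the objective $\sum(K\eta_n A_n+M)^2$.
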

	\begin{proof}
		We work with the discrete time approximation \eqref{eq:LMC}, with a step-size $h =\frac{T}{N}$ for $N\geq \beta T$.
		Observe that 
		\begin{align*}
			\tond*{\lmcsem{h}}^N = \tond*{\gdsem{h}\gausem{2h}}^N = \gdsem{h} \tond*{\gausem{2h}\gdsem{h}}^{N-1} \gausem{2h}
		\end{align*}
		and by the data processing inequality 
				\begin{align*}
			\kldiv{\mu \tond*{\lmcsem{h}}^N}{ \nu \tond*{\lmcsem{h}}^N} & \le  \kldiv{ \mu\gdsem{h} \tond*{\gausem{2h}\gdsem{h}}^{N-1} }{\nu \gdsem{h} \tond*{\gausem{2h}\gdsem{h}}^{N-1}}.
		\end{align*}
		The one step transition kernel 
		\[
				 \tond*{\gausem{2h}\gdsem{h}}
		\]
		now satisfies Assumption \ref{ass:kl-reg-one-step} with $C=\frac{1}{4h}$ since
        \[
            \kldiv{\delta_x \gausem{2h}\gdsem{h}}{\delta_y \gausem{2h}\gdsem{h}} \leq \kldiv{\delta_x \gausem{2h}}{\delta_y \gausem{2h}} \le \frac{\abs*{x-y}^2}{4h}.
        \]
        Furthermore, Assumption \ref{ass:weak-curv-wp} holds with $p=2$,
        $K = 1-\alpha h$ and $M = 2Lh$, as in the proof of Lemma \ref{lem:LMC-PS-weak-curvature}. The conclusion follows then from Theorem \ref{thm:reverse-transport-entropy-discrete-time}, applied to the initial measures $(\mu\gdsem{h},\nu\gdsem{h})$ with $N-1$ steps, and finally letting $N\to \infty$, recalling the lower semicontinuity of the relative entropy \cite[Lemma 9.4.3]{amb-gig-sav-2008} and that
		$\mu \tond*{\lmcsem{h}}^N \to \mu\ldsem_T$ weakly as $N \to \infty$.

	\end{proof}
	
	\begin{remark}
		Reverse transport-entropy inequalities for the Langevin dynamics \eqref{eq:LD} beyond strong log-concavity were also studied in the very recent preprint \cite{lu-wan-2025}. The main Assumption of \cite{lu-wan-2025}  is that for constants $R,M,m>0$
        \[
            -\scal*{\nabla U(x)-\nabla U(y), x-y}\le\begin{cases} -m\abs*{x-y}^2 
            \text{ if } \abs*{x-y}\ge R,
            \\
            M\abs*{x-y}^2 
            \text{ if } \abs*{x-y}\le R.
            \end{cases}
        \]
        This is similar in spirit to Assumption \ref{ass:log-lip-pert-log-conc}, by requiring a contraction whenever $\abs*{x-y}$ is large enough, but the assumptions are not directly comparable. One advantage of the bound in \cite{lu-wan-2025} is that, unlike \eqref{eq:rev-transp-entr-ld}, $\kldiv{\mu \ldsem_t}{\nu\ldsem_t}$ decays to $0$ (exponentially fast) as $t\to \infty$, reflecting the ergodicity of the process.
        However, the constants involved exhibit a less favorable behavior with respect to the problem parameters $R,m,M$, and
         Corollary \ref{cor:kl-w-reg-LD} is thus more useful for our applications to cutoff, where we only need to exploit a finite-time regularization.
	\end{remark}
	We also have an analogous result for the Proximal Sampler.
	 Here, we only record a looser bound for readability, and since it suffices for our purposes and most applications.
	\begin{corollary}\label{cor:kl-w-reg-PS}
		For the Proximal Sampler, under Assumption \ref{ass:log-lip-pert-log-conc} for $N\in \N_{\ge 1}$ and $\mu,\nu \in \pp_2\tond*{\R^d}$ we have
		\begin{equation}
			\kldiv{\mu\tond*{\pssem{h}}^N}{\nu\tond*{\pssem{h}}^N} \leq 
			\begin{cases}
				\frac{\tond*{W_2(\mu,\nu)+2LNh}^2}{2Nh} \quad \text{if } \alpha = 0
				\\
				\frac{\alpha(2+\alpha h)}{(1+\alpha h)^{2N}-1}\tond*{W_2(\mu,\nu)+\frac{2L}{\alpha}\tond*{(1+\alpha h)^{N-1}-1}}^2\quad \text{if } \alpha > 0.
			\end{cases}
		\end{equation}
	\end{corollary}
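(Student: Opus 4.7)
The plan is to apply Theorem \ref{thm:reverse-transport-entropy-discrete-time} directly to the Markov kernel $P = \pssem{h}$, after verifying its two hypotheses for this kernel. The weak curvature Assumption \ref{ass:weak-curv-wp} for $p = 2$ is exactly the content of the second part of Lemma \ref{lem:LMC-PS-weak-curvature}, which yields $K = \frac{1}{\alpha h + 1}$ and $M = \frac{2Lh}{\alpha h + 1}$.

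To verify Assumption \ref{ass:kl-reg-one-step}, I would exploit the decomposition $\pssem{h} = \gausem{h}\revgausem{h}$, which begins with a Gaussian convolution of variance $h$. By the data-processing inequality followed by the standard KL formula between Gaussians with identical covariance,
\[
\kldiv{\delta_x \pssem{h}}{\delta_y \pssem{h}} \leq \kldiv{\delta_x \gausem{h}}{\delta_y \gausem{h}} = \frac{\abs*{x-y}^2}{2h},
\]
so Assumption \ref{ass:kl-reg-one-step} holds with $C = \frac{1}{2h}$.

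With these constants in hand, the corollary reduces to algebraic bookkeeping in Theorem \ref{thm:reverse-transport-entropy-discrete-time}. For $\alpha > 0$ I would use the looser (second) inequality in \eqref{eq:rev-w-h-disc-pos-curv-weak}: after substituting $K = \frac{1}{1+\alpha h}$, $M = \frac{2Lh}{1+\alpha h}$, $C = \frac{1}{2h}$ and simplifying via $1 - K^2 = \frac{\alpha h (2+\alpha h)}{(1+\alpha h)^2}$ together with telescoping of the factor $K^{2(N-1)}$ against the denominator, one recovers the stated expression exactly. For $\alpha = 0$ the looser bound would lose a factor of roughly $2$ that the stated expression does not carry, so I would instead invoke the tighter case-by-case estimate in \eqref{eq:rev-w-h-disc-non-neg-curv-weak}: the case $W_2(\mu,\nu) \geq 2Lh$ follows immediately from the monotonicity $W_2(\mu,\nu) + 2L(N-1)h \leq W_2(\mu,\nu) + 2LNh$, while the case $W_2(\mu,\nu) < 2Lh$ reduces to a short quadratic inequality in $W_2(\mu,\nu)$ which is easily checked by computing the discriminant.

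The only ``obstacle'' here is accounting, not ideas: all the substantive work has already been done in Theorem \ref{thm:reverse-transport-entropy-discrete-time} and Lemma \ref{lem:LMC-PS-weak-curvature}, and this corollary is just their specialization to the Proximal Sampler. The slightly subtle point is choosing the correct inequality in Theorem \ref{thm:reverse-transport-entropy-discrete-time} for each value of $\alpha$ — looser for $\alpha > 0$ (where it is in fact tight for this application), tighter for $\alpha = 0$ — which is what allows the resulting bound to be presented as a single clean formula in each regime.
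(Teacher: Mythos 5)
Your proposal is correct and follows the same route as the paper: verify Assumption \ref{ass:kl-reg-one-step} with $C=\tfrac{1}{2h}$ via the data-processing inequality applied to the forward Gaussian step, pull the constants $K=\tfrac{1}{1+\alpha h}$, $M=\tfrac{2Lh}{1+\alpha h}$ from Lemma \ref{lem:LMC-PS-weak-curvature}, and substitute into Theorem \ref{thm:reverse-transport-entropy-discrete-time}. Your observation that for $\alpha>0$ the ``looser'' one-line bound in \eqref{eq:rev-w-h-disc-pos-curv-weak} simplifies to exactly the stated expression, while for $\alpha=0$ one must instead use the case-by-case estimate in \eqref{eq:rev-w-h-disc-non-neg-curv-weak} (checking the sub-case $W_2(\mu,\nu)<2Lh$ via a short convex quadratic inequality in $W_2(\mu,\nu)$), is accurate and is a useful clarification of what the paper summarizes as ``elementary simplifications.''
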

	\begin{proof}
		We have for $x,y\in \R^d$ and $h>0$
		\begin{align*}
			\kldiv{\delta_x \pssem{h}}{\delta_y \pssem{h}} & = \kldiv{\delta_x \gausem{h}\revgausem{h}}{\delta_y \gausem{h}\revgausem{h}}
			\\
			& \le \kldiv{\delta_x \gausem{h}}{\delta_y \gausem{h}}
			\\
			& = \frac{\abs*{x-y}^2}{2h},
		\end{align*}
		so Assumption \ref{ass:kl-reg-one-step} is satisfied with $C = \frac{1}{2h}$. Recalling Lemma \ref{lem:LMC-PS-weak-curvature},
		Assumption \ref{ass:weak-curv-wp} holds for $p=2$, $K=\frac{1}{\alpha h+1}$ and $M=\frac{2Lh}{1+\alpha h}$, and
        the conclusion follows easily from Theorem \ref{thm:reverse-transport-entropy-discrete-time} and elementary simplifications.
	\end{proof}

	\section{Applications to cutoff}
    \label{sec:cutoff-criteria}

    Following the systematic approach developed in the series of works \cite{sal-2023,sal-2024,sal-2025-diffusions,ped-sal-2025}, and in particular the recent paper \cite{ped-sal-2025-bis},
    our goal is to estimate the width of the mixing window $\wmix(\eps) = \tmixa{\eps}-\tmixa{1-\eps}$ for any fixed $\eps \in (0,\frac12)$, leveraging the results of the previous sections.
    To this end, we first need the following lemma (a straightforward adaptation of Theorem 1 in \cite{ped-sal-2025-bis}): combined with the results of Section \ref{sec:def-loc-conc}, it allows us to upgrade the known upper bound on the total variation distance $\totvar{\mu_{t_0}}{\pi} \le 1-\eps$ at the mixing time $t_0 = \tmixa{1-\eps}$ to an upper bound on the Wasserstein distance $W_p(\mu_{t_0},\pi)$.	
	\begin{lemma}[W-TV transport inequality]\label{lem:WTV}
		Let $\mu_1,\mu_2\in\pp\tond*{\Omega}$, $p\ge1$ and $C_1,M_1,C_2,M_2\ge 0$ such that 
		$\mu_i \in \tpha{C_i,M_i}$.
		Then, if $\totvar{\mu_1}{\mu_2}<1$ we have that
		\begin{equation*}
			W_p(\mu_1,\mu_2)  \le \tond*{\sqrt{C_1}+\sqrt{C_2}}\sqrt{2\log\frac{1}{1-\totvar{\mu_1}{\mu_2}}}+M_1+M_2.
		\end{equation*}
	\end{lemma}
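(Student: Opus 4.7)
The plan is to adapt the classical Bobkov--Götze-type argument by inserting an intermediate probability measure that captures the common mass of $\mu_1$ and $\mu_2$. Set $\eps \coloneqq \totvar{\mu_1}{\mu_2}$; if $\eps = 1$ the right-hand side is $+\infty$ and there is nothing to prove, so assume $\eps <1$. Letting $\mu_1 \wedge \mu_2$ denote the pointwise minimum of the densities of $\mu_1,\mu_2$ with respect to any dominating measure, I would define the probability measure
\[
    \nu \coloneqq \frac{1}{1-\eps}\tond*{\mu_1 \wedge \mu_2},
\]
which is a bona fide probability measure because the standard variational identity for total variation gives $\int d\tond*{\mu_1\wedge \mu_2} = 1-\eps$.

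Next, I would apply the triangle inequality for $W_p$ to split
\[
    W_p(\mu_1,\mu_2) \le W_p(\mu_1,\nu)+W_p(\nu,\mu_2),
\]
and then apply the assumed defective transport-entropy inequality for each $\mu_i$ to obtain
\[
    W_p(\nu, \mu_i) \leq \sqrt{2C_i \kldiv{\nu}{\mu_i}} + M_i, \quad i=1,2.
\]
The final key observation is that the relative entropy of $\nu$ with respect to either $\mu_i$ is controlled uniformly: indeed, by construction $\nu \ll \mu_i$ with Radon--Nikodym derivative bounded pointwise by
\[
    \frac{d\nu}{d\mu_i} \le \frac{1}{1-\eps},
\]
so that
\[
    \kldiv{\nu}{\mu_i} = \int \log \frac{d\nu}{d\mu_i}\, d\nu \leq \log\frac{1}{1-\eps}.
\]
Combining the three displays and summing the $M_i$'s yields exactly the bound in the statement.

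There is no real technical obstacle here: once the intermediate measure $\nu$ is introduced, everything reduces to the defining property $\mu_i \in \deftpha{C_i, M_i}$ together with the elementary computation of $d\nu/d\mu_i$. The only minor subtlety worth double-checking is that the inequality $\sqrt{a}+\sqrt{b} \le \sqrt{2(a+b)}$ is \emph{not} needed here (the two square roots on the right-hand side are kept separate), since we bound each of the two $W_p(\nu,\mu_i)$ terms individually and then add.
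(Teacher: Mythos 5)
Your proof is correct and, as far as one can tell, takes exactly the approach behind the paper's Lemma \ref{lem:WTV}; the paper itself gives no proof, deferring to ``a straightforward adaptation of Theorem 1 in \cite{ped-sal-2025-bis}'', and that argument is precisely the one you reproduce: introduce the renormalized overlap measure $\nu \coloneqq (\mu_1 \wedge \mu_2)/(1-\eps)$ with $\eps = \totvar{\mu_1}{\mu_2}$, bound $\kldiv{\nu}{\mu_i} \le \log\frac{1}{1-\eps}$ via the pointwise density bound $d\nu/d\mu_i \le (1-\eps)^{-1}$, apply the two defective inequalities, and conclude by the triangle inequality. Your closing observation that $\sqrt{a}+\sqrt{b}\le\sqrt{2(a+b)}$ is not needed is also right: the constant in the lemma is stated as $\sqrt{C_1}+\sqrt{C_2}$, not $\sqrt{2(C_1+C_2)}$, precisely because the two Wasserstein legs are bounded separately before summing, so this is the sharp form of the argument.
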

    The rest of the argument consists in applying a reverse transport-entropy inequality as in Theorem \ref{thm:reverse-transport-entropy-discrete-time} (when $p=2$), in order to further upgrade the upper bound on the Wasserstein distance into an upper bound on the relative entropy $\kldiv{\mu_{t_0+s}}{\pi}$, at the price of waiting an additional time $s >0$.
    Finally, an upper bound on $\tmixa{\eps}$ is readily obtained, recalling that mixing in total variation distance occurs quickly once the relative entropy is small (cf. Lemma 7 in \cite{sal-2023}), in terms of the spectral gap of $P$.
    
    We illustrate now this approach more concretely on our two model examples, the Langevin dynamics and the Proximal Sampler.
    Before proceeding, we recall from \cite{bri-ped-2024} the following bound on the Poincaré constant under Assumption \ref{ass:log-lip-pert-log-conc}, which allows us to state also a bound depending only on $\alpha$ and $L$.
    \begin{lemma} \label{lem:cpi-pi-log-lip-pert}
        Under Assumption \ref{ass:log-lip-pert-log-conc} for $\alpha>0$, we have 
        \[
            \cpi{\pi} \le \frac{1}{\alpha} \expo{\frac{L^2}{\alpha}+\frac{4L}{\sqrt{\alpha}}}.
        \]
    \end{lemma}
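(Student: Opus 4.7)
The plan is to propagate the Poincaré inequality from the $\alpha$-strongly log-concave reference measure $\mu\propto e^{-V}$ to the perturbed measure $\pi\propto e^{-V-H}$, exploiting that the Radon--Nikodym weight $e^{-H}$ has sub-Gaussian fluctuations under $\mu$ thanks to the Lipschitz character of $H$.

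First, by the Bakry--\'Emery criterion, $\hess V\succcurlyeq \alpha I_d$ implies that $\mu$ satisfies a log-Sobolev inequality with constant $\frac{1}{\alpha}$, so in particular $\cpi{\mu}\le \frac{1}{\alpha}$. Applying Herbst's argument to the $L$-Lipschitz function $H$ yields
\[
    \int \expo{\lambda \tond*{H - \textstyle\int H\dd\mu}}\dd\mu \le \expo{\frac{\lambda^2 L^2}{2\alpha}} \qquad \text{for all }\lambda\in\R,
\]
which in turn controls the normalizing constant $Z=\int e^{-H}\dd\mu$ and all polynomial moments of $e^{\pm qH}$ under $\mu$, up to an innocuous shift by $\int H\dd\mu$.

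Second, for any $f\in C^1(\R^d)$ centered under $\mu$, I would bound $\variwr{\pi}{f}\le \int f^2\dd\pi = Z^{-1}\int f^2 e^{-H}\dd\mu$ and apply Hölder at conjugate exponents $(p,q)$, isolating the factor $\tond*{\int e^{-pH}\dd\mu}^{1/p}$ -- estimated via the MGF bound above -- from $\tond*{\int f^{2q}\dd\mu}^{1/q}$, which by iterated Poincaré and Gross-type hypercontractivity for $\mu$ is controlled by a constant times $\int\abs*{\nabla f}^2\dd\mu$. A symmetric Hölder step, now relying on $\int e^{qH}\dd\mu$, converts $\int\abs*{\nabla f}^2\dd\mu$ back into $\int\abs*{\nabla f}^2\dd\pi$, completing the transfer and producing an inequality of the shape $\cpi{\pi}\le \frac{1}{\alpha}\expo{c(L,\alpha,p,q)}$.

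The main obstacle is to optimize the Hölder exponents $p,q$ and handle the shift by $\int H\dd\mu$ so that the exponent collapses to exactly $\frac{L^2}{\alpha}+\frac{4L}{\sqrt{\alpha}}$. The quadratic term comes naturally from the sub-Gaussian variance proxy; the cross term $\frac{4L}{\sqrt{\alpha}}$ requires carefully balancing the shift (which via Poincaré for $\mu$ is of order $L/\sqrt{\alpha}$) against the Hölder loss, rather than using crude Cauchy--Schwarz. A more robust alternative, should direct optimization prove unwieldy, would be a Bakry--Barthe--Cattiaux--Guillin Lyapunov argument: on a ball of radius $R\sim L/\sqrt{\alpha}$, Holley--Stroock with $\mathrm{osc}(H)\le 2LR$ combined with local Poincaré for $\mu$ handles the interior, while outside the ball one exploits that $\nabla U$ is dominated by the $\alpha$-convex part $\nabla V$ to secure the drift condition.
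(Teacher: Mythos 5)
The paper does not prove this lemma; it imports the bound verbatim from \cite{bri-ped-2024}. The argument there is via Lipschitz transport maps: one builds a map $T$ pushing the $\alpha$-log-concave reference $\mu\propto e^{-V}$ forward to $\pi$ with $\lip{T}\le \expo{\frac{L^2}{2\alpha}+\frac{2L}{\sqrt{\alpha}}}$, and then $\cpi{\pi}\le \lip{T}^2\cpi{\mu}\le \frac{1}{\alpha}\expo{\frac{L^2}{\alpha}+\frac{4L}{\sqrt{\alpha}}}$. That is a genuinely different route from either of yours, and it is the \emph{only} route I know that produces this exact exponent; the squared-Lipschitz factor is exactly where the clean form $\expo{\frac{L^2}{\alpha}+\frac{4L}{\sqrt\alpha}}$ comes from.

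Your primary proposal has a concrete gap. You want to control $\tond*{\int f^{2q}\dd\mu}^{1/q}$, for a fixed $q>1$, ``by iterated Poincaré and hypercontractivity'' in terms of $\int\abs*{\nabla f}^2\dd\mu$. No such inequality $\norm{f}_{L^{2q}(\mu)}^2\lesssim \norm{\nabla f}_{L^2(\mu)}^2$ holds for a general log-concave $\mu$ and $q>1$, even after centering. For the standard Gaussian $\gamma$ and $f(x)=x_1^k$, one computes $\norm{\nabla f}_{L^2(\gamma)}^2 = k^2(2k-3)!!$ and $\norm{f}_{L^4(\gamma)}^2=\sqrt{(4k-1)!!}$, and the ratio $\norm{f}_{L^4}^2/\norm{\nabla f}_{L^2}^2$ diverges like $2^k/k^2$ as $k\to\infty$. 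Hypercontractivity gives $\norm{P_t f}_{L^{q(t)}}\le\norm{f}_{L^2}$, i.e.\ smoothing along the semigroup, not a static Sobolev embedding of $\dot H^1(\mu)$ into $L^{2q}(\mu)$; the latter simply fails in infinite dimensions. So the Hölder strategy collapses at that step, independent of how you tune $p,q$ or handle the recentering of $H$.

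Your fallback Lyapunov/Holley--Stroock route is sound in spirit and would yield \emph{a} bound, but it cannot plausibly produce the precise constant in the lemma. The Bakry--Barthe--Cattiaux--Guillin mechanism involves a radius $R$, an oscillation bound $\mathrm{osc}_{B_R}(H)\le 2LR$, a local Poincaré constant for the truncated region, and drift estimates outside; the resulting expression is a sum of contributions with no reason to collapse to $\frac1\alpha\expo{\frac{L^2}{\alpha}+\frac{4L}{\sqrt\alpha}}$. If you want a self-contained proof matching the stated constant, you should instead reproduce the Lipschitz-transport argument of \cite{bri-ped-2024} (heat-flow/Kim--Milman map from $\mu$ to $\pi$, then the standard change-of-variables transfer of the Poincaré inequality).
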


	\subsection{Langevin dynamics}
    
    Our main result in this section for the Langevin dynamics is the following estimate on the total variation mixing window.
    \begin{theorem} \label{thm:cutoff-LD-general}
        Under Assumption \ref{ass:log-lip-pert-log-conc} and \ref{ass:log-lip-potential} for $\alpha >0$, consider a probability measure $\mu_0 \in \defttha{J,S}$ (for $J,S\ge 0$) as initialization for \eqref{eq:LD}, and fix $\eps\in (0,\frac12)$. Then, 
        \begin{equation}
        \begin{split}
            \wmixa{\mu_0,\eps} &\lesiep \cpi{\pi}\tond*{\frac{L^2}{\alpha}+1}+\sqrt{\cpi{\pi}}\tond*{e^{-\alpha \tmixa{\mu_0,1-\eps}}(\sqrt{J}+S)+\frac1{\sqrt{\alpha}}+\frac{L}{\alpha}}
            \\
            & \lesiep \expo{\frac{L^2}{\alpha}+\frac{4L}{\sqrt{\alpha}}}\cdot \frac1\alpha\tond*{1+\frac{L^2}{\alpha}+\frac{L}{\sqrt{\alpha}}+{e^{-\alpha \tmixa{\mu_0,1-\eps}}(\sqrt{J}+S)}{\sqrt{\alpha}}}.
        \end{split}
        \end{equation}
    \end{theorem}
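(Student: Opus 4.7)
The plan is to follow the three-step scheme from \cite{ped-sal-2025-bis,sal-2023}: setting $t_0 := \tmixa{\mu_0,1-\eps}$ and $\mu_t := \mu_0 \ldsem_t$, so that $\totvar{\mu_{t_0}}{\pi}\le 1-\eps$ by definition, progressively upgrade this total variation estimate into a bound on $\tmixa{\mu_0,\eps}$ itself, so that the mixing window $\wmixa{\mu_0,\eps}=\tmixa{\mu_0,\eps}-t_0$ is controlled by the \emph{additional} time required along the way.

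\emph{Step 1 (TV $\to W_2$).} Corollary~\ref{cor:local-defective-transport-entropy-LD} provides $\mu_{t_0} \in \defttha{A_1,B_1}$ with
$A_1 \lesssim e^{-2\alpha t_0}J + 1/\alpha$ and $B_1 \lesssim e^{-\alpha t_0}S + L/\alpha$, while the discussion preceding Lemma~\ref{lem:def-tth-winfty-close} gives $\pi\in\defttha{1/\alpha, 2L/\alpha}$. Feeding these constants together with the TV bound at time $t_0$ into Lemma~\ref{lem:WTV} yields
\[
W_2(\mu_{t_0},\pi) \;\lesiep\; \mathcal{W}, \qquad \mathcal{W} := \tfrac{1}{\sqrt{\alpha}}+\tfrac{L}{\alpha}+e^{-\alpha t_0}\bigl(\sqrt{J}+S\bigr).
\]

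\emph{Step 2 ($W_2 \to $ KL).} For any $s>0$, Corollary~\ref{cor:kl-w-reg-LD} applied to the pair $(\mu_{t_0},\pi)$ gives
\[
\kldiv{\mu_{t_0+s}}{\pi} \;\le\; \frac{\alpha}{2(1-e^{-2\alpha s})}\Bigl(e^{-\alpha s}\,W_2(\mu_{t_0},\pi)+\tfrac{2L}{\alpha}(1-e^{-\alpha s})\Bigr)^2.
\]

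\emph{Step 3 (KL $\to$ TV).} Invoke Lemma~7 of \cite{sal-2023}: using the Poincaré inequality for $\pi$, once the relative entropy above has been brought below a universal constant, an additional time $s' \lesiep \cpi{\pi}\,(1+\log(1/\eps))$ suffices to guarantee $\totvar{\mu_{t_0+s+s'}}{\pi}\le \eps$, so that $\tmixa{\mu_0,\eps}\le t_0+s+s'$.

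The heart of the argument, and the step I expect to be the main obstacle, is the optimization balancing $s$ in Step~2 against $s'$ in Step~3. The natural choice is $s \asymp \cpi{\pi}$; since the Poincaré constant of any $\alpha$-log-concave measure is at most $1/\alpha$ and $\pi$ is a log-Lipschitz perturbation, one has $\cpi{\pi}\gtrsim 1/\alpha$, so $\alpha s \gtrsim 1$ and $\alpha/(1-e^{-2\alpha s})$ stays of order $\alpha$. Plugging Step~1 into Step~2 then yields an entropy bound of the form $\kldiv{\mu_{t_0+s}}{\pi}\lesiep \alpha e^{-2\alpha s}\mathcal{W}^2+L\mathcal{W}+L^2/\alpha$; to absorb the additive $L^2/\alpha$ term one needs $s$ of order $\cpi{\pi}(1+L^2/\alpha)$, while the $e^{-2\alpha s}\mathcal{W}^2$ contribution is defeated once $s\gtrsim \tfrac{1}{\alpha}\log(\alpha\mathcal{W}^2)$; the latter can be charged against a $\sqrt{\cpi{\pi}}\,\mathcal{W}$ term by an AM--GM splitting, producing the claimed $\sqrt{\cpi{\pi}}\,\mathcal{W}$ prefactor. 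Combining $s+s'$ and substituting back $\mathcal{W}$ yields the first inequality
\[
\wmixa{\mu_0,\eps} \;\lesiep\; \cpi{\pi}\Bigl(\tfrac{L^2}{\alpha}+1\Bigr)+\sqrt{\cpi{\pi}}\Bigl(\tfrac{1}{\sqrt{\alpha}}+\tfrac{L}{\alpha}+e^{-\alpha t_0}(\sqrt{J}+S)\Bigr),
\]
and the second inequality follows by substituting the explicit estimate $\cpi{\pi}\le \tfrac{1}{\alpha}\exp(L^2/\alpha+4L/\sqrt{\alpha})$ from Lemma~\ref{lem:cpi-pi-log-lip-pert} and collecting terms. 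The delicate point is ensuring that both the additive defect $B_1$ carrying the initialization data and the noise-like $L/\alpha$ contributions are threaded through the $W$--TV and $W_2$--KL upgrades in such a way that the $e^{-\alpha t_0}(\sqrt{J}+S)$ piece is preserved (rather than absorbed into a constant) so that it decays as $t_0$ grows, and that no double counting of $\cpi{\pi}$-factors occurs across Steps~2 and~3.
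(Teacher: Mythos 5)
Your three-step scaffold — TV $\to W_2$ via Corollary~\ref{cor:local-defective-transport-entropy-LD} and Lemma~\ref{lem:WTV}, $W_2\to$ KL via Corollary~\ref{cor:kl-w-reg-LD}, then KL $\to$ TV via the spectral-gap argument — is exactly the structure the paper uses, and Steps~1 and~2 match. The gap is in how you balance the additional time $s$; two concrete problems arise.

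First, you assert $\cpi{\pi}\gtrsim 1/\alpha$ ``since the Poincaré constant of any $\alpha$-log-concave measure is at most $1/\alpha$ and $\pi$ is a log-Lipschitz perturbation.'' This is false and the reasoning doesn't go in the right direction: an $\alpha$-log-concave $V$ gives $\cpi{\cdot}\le 1/\alpha$, not $\ge 1/\alpha$, and taking $L=0$ and $V=\tfrac{\beta}{2}|x|^2$ with $\beta\gg\alpha$ gives $\cpi{\pi}=1/\beta\ll 1/\alpha$. The paper never needs such a lower bound: after the crude split $(a+b)^2\le 2a^2+2b^2$ inside the KL bound, it uses the elementary inequality $\frac{\alpha}{e^{2\alpha s}-1}\le \frac{1}{2s}$, valid for every $s>0$, to land on
\[
\kldiv{\mu_{t_0+s}}{\pi}\;\lesiep\; \frac{1}{s}\Bigl(e^{-2\alpha t_0}(J+S^2)+\tfrac{1}{\alpha}+\tfrac{L^2}{\alpha^2}\Bigr)+\tfrac{L^2}{\alpha},
\]
which makes the subsequent optimization transparent.

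Second, your proposed expansion $\kldiv{\mu_{t_0+s}}{\pi}\lesiep\alpha e^{-2\alpha s}\mathcal{W}^2+L\mathcal{W}+L^2/\alpha$ has dropped the $e^{-\alpha s}$ factor on the cross term; as written, multiplying by $\cpi{\pi}$ produces a stray $\cpi{\pi}L\mathcal{W}$ contribution that is \emph{not} dominated by the theorem's right-hand side, which only allows $\sqrt{\cpi{\pi}}\,\mathcal{W}$. The hand-wavy phrase ``charged against a $\sqrt{\cpi{\pi}}\,\mathcal{W}$ term by an AM--GM splitting'' does not actually bridge this. What closes the argument cleanly is the paper's explicit choice: plug the displayed KL bound into $\wmixa{\mu_0,\eps}\lesiep s+\cpi{\pi}\bigl[1+\kldiv{\mu_{t_0+s}}{\pi}\bigr]$ (this is \cite[Lemma 2]{sal-2025}, the continuous-time analogue of the Lemma~7 you cite) to get
\[
\wmixa{\mu_0,\eps}\lesiep s+\cpi{\pi}\Bigl(1+\tfrac{L^2}{\alpha}\Bigr)+\frac{\cpi{\pi}}{s}\Bigl(e^{-2\alpha t_0}(J+S^2)+\tfrac{1}{\alpha}+\tfrac{L^2}{\alpha^2}\Bigr),
\]
and then set $s=\sqrt{\cpi{\pi}\bigl(e^{-2\alpha t_0}(J+S^2)+1/\alpha+L^2/\alpha^2\bigr)}$ so that the two $s$-dependent terms balance to $\sqrt{\cpi{\pi}}\,\mathcal{W}$. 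With that substitution made explicit, your scaffold becomes the paper's proof; without it, the optimization step is left unjustified.
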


    \begin{proof}
        Fix $\eps \in (0,\frac12)$ and set $t_0 \coloneqq \tmix\tond*{\mu_0,1-\eps}$.
        By Corollary \ref{cor:local-defective-transport-entropy-LD}, 
        \begin{align}
            \mu_{t_0} &\in \defttha{e^{-2\alpha t_0}J +\frac1\alpha, e^{-\alpha t_0}S+\frac{2L}{\alpha}},
            \\
            \pi &\in \defttha{\frac1\alpha, \frac{2L}{\alpha}}.
        \end{align}
        Hence, by Lemma \ref{lem:WTV}
        \[
            W_2\tond*{\mu_{t_0},\pi} \lesiep e^{-\alpha t_0}(\sqrt{J}+S)+\sqrt{\frac{1}{\alpha}}+\frac{L}{\alpha}.
        \]
         By Corollary \ref{cor:kl-w-reg-LD} we deduce that for any $s> 0$
        \[
            \kldiv{\mu_{t_0+s}}{\pi} \lesiep 
            \frac{\alpha}{e^{2\alpha s}-1}\tond*{e^{-2\alpha t_0}(J+S^2)+{\frac{1}{\alpha}}+\frac{L^2}{\alpha^2}}+\frac{L^2}{\alpha}.
        \]
        Hence, by \cite[Lemma 2]{sal-2025-diffusions} for any $s> 0$ we have that
        \begin{align} \label{eq:ld-wmix-to-optimize-bound}
                	\wmixa{\mu_0,\eps} & \lesiep  s+
                \cpi{\pi}\quadr*{1+
                \frac{\alpha}{e^{2\alpha s}-1}\tond*{e^{-2\alpha t_0}(J+S^2)+{\frac{1}{\alpha}}+\frac{L^2}{\alpha^2}} +\frac{L^2}{\alpha}}
                \\
                \label{eq:ld-wmix-to-optimize-bound-bis}
                & \lesiep s+\cpi{\pi}\tond*{1+\frac{L^2}{\alpha} }+\frac{\cpi{\pi}}{s}\tond*{e^{-2\alpha t_0}(J+S^2)+{\frac{1}{\alpha}}+\frac{L^2}{\alpha^2}},
        \end{align}
        where we have used that $\frac{\alpha}{e^{2\alpha s}-1}\le \frac{1}{2s}$ for $s>0$ in the last line.
        We can now make the choice $s = \sqrt{\cpi{\pi}\tond*{e^{-2\alpha t_0}(J+S^2)+{\frac{1}{\alpha}}+\frac{L^2}{\alpha^2}}}$, which yields
        \[
                	\wmixa{\mu_0,\eps} \lesiep \cpi{\pi}\tond*{\frac{L^2}{\alpha}+1}+\sqrt{\cpi{\pi}}\tond*{e^{-\alpha t_0}(\sqrt{J}+S)+\frac1{\sqrt{\alpha}}+\frac{L}{\alpha}},
        \]
        This is the first bound in the theorem, and the second one follows by substituting the estimate for $\cpi{\pi}$ from Lemma \ref{lem:cpi-pi-log-lip-pert}.
    \end{proof}
    \begin{remark}
        A better estimate, at the expense of readability, can be obtained by optimizing over $s\ge 0$ in \eqref{eq:ld-wmix-to-optimize-bound}.
    \end{remark}

    To appreciate the above result, we compare it with the recent \cite[Theorem 2]{sal-2025-diffusions}, which states that if $\pi$ is $\alpha$-log-concave (for $\alpha>0$) and the initialization $\mu_0 =\delta_x$ is deterministic, then $\wmixa{\eps} \lesiep \frac1\alpha$: hence, if $\alpha>0$ is bounded from below, the mixing window is remarkably of constant order. In contrast, in many situations it is easily seen that the mixing time 
    diverges to $\infty$ as the dimension $d\to \infty$, and this leads therefore to a cutoff phenomenon. A classical toy example is given by the Ornstein--Uhlenbeck process in $\R^d$ initialized at $\mu_0 = \delta_0$, for which  \cite[Theorem 2]{sal-2025-diffusions} correctly predicts $\wmixa{\eps}\lesiep 1$, while it is easily seen that $\tmixa{\eps}\to \infty$ as $d \to \infty$ for fixed $0<\eps<1$.
    Theorem \ref{thm:cutoff-LD-general} above generalizes this phenomenon to the class of log-Lipschitz perturbations of strongly log-concave measures, as in Assumption \ref{ass:log-lip-pert-log-conc}, by proving that $\wmixa{\eps} \lesiep C(L,\alpha)$, i.e. that for any fixed $\eps\in (0,\frac12)$ the mixing window is uniformly bounded by a positive constant that depends only on $L$ and $\alpha$. 
    For concreteness, we consider below a simple example in this class.
\begin{example}
    Consider in $\R^d$ a target
    $ \pi_d=\nu_d*\mathcal{N}\tond*{0,I_d}$
    for a distribution $\nu_d\in\pp\tond*{\R^d}$ whose support is
    contained in the centered Euclidean ball $B\tond*{0,R}$ of radius $R>0$,
    for some fixed $R>0$ independent of the dimension.
    Observe that $\pi_d$ can be highly non-log-concave when $R>1$, as
    in the simple case where
    $  \nu_d=\frac12\tond*{\delta_{-Re_1}+\delta_{Re_1}}$,
    with $\tond*{e_i}_i$ the standard basis of $\R^d$.
    However, following \cite[Example 2.3]{bri-ped-2024} (see the
    arXiv version), we can write $\pi_d\propto e^{-U_d}$ with
    \[
        U_d(x)=\frac12\abs*{x}^2+H_d(x),
        \qquad 
        \abs*{\nabla H_d(x)}\le R.
    \]
    In particular, Assumptions \ref{ass:log-lip-pert-log-conc} and \ref{ass:log-lip-potential} hold
    with $\alpha=\beta=1$ and $L=R$.
    An application of Theorem \ref{thm:cutoff-LD-general} implies then
    that, from any deterministic initialization,
    $ \wmixa{\eps}\lesiep C(R)$ for a constant $C(R)>0$ depending only on $R$, and not on the
    dimension $d$. In contrast, for the deterministic initialization
    $\delta_0$ at the origin, for example, it is easy to see that the mixing time
    goes to infinity.
    Indeed, fix a finite time horizon $T>0$. Using a synchronous
    coupling between the Langevin dynamics and the
    Ornstein--Uhlenbeck process, and using that the drift $\nabla H_d$
    is bounded, it is easy to see that
    $
        W_\infty\tond*{
            \delta_0P_T,
            \mathcal{N}\tond*{0,\tond*{1-e^{-2T}}I_d}
        }
        \le R
    $
    (cf. also the proof of \cite[Proposition 1.1]{khu-maa-ped-2024}).
    Recalling the definition of $\pi_d$ as a convolution, this holds for $T= \infty$ too, i.e. $
        W_\infty\tond*{\pi_d, \mathcal{N}\tond*{0,I_d}}\le R$.
    Now choose any $c\in\tond*{\sqrt{1-e^{-2T}},1}$. Since the Euclidean norm of a standard Gaussian vector $Z_d\sim \mathcal{N}(0,I_d)$ concentrates around $\sqrt{d}$, i.e. $\frac{\abs*{Z_d}}{\sqrt{d}} \to 1$ in probability as $d\to \infty$, 
    we deduce from the two
    $W_\infty$ bounds above that
    \[
        \delta_0 P_T\tond*{B\tond*{0,c\sqrt d}}\to 1,
        \qquad \pi_d\tond*{B\tond*{0,c\sqrt d}} \to 0
    \]
    as $d\to\infty$. In particular, it follows that, for every fixed $T>0$,
    $\totvar{\delta_0 P_T}{\pi_d} \to 1$ as $d\to \infty$, as desired.

    To sum up, we have therefore established the occurrence of a cutoff
    phenomenon beyond the log-concave setting: as
    $d\to\infty$, the mixing time goes to infinity, whereas the mixing
    window is uniformly bounded by a constant which depends only on $R$ and $\eps$.
\end{example}

	\subsection{Proximal sampler}
    In this section, we derive an estimate for the mixing window for the Proximal Sampler, analogous to the one for the Langevin dynamics.
	Here, for simplicity and to lighten the notation, we restrict ourselves to the case where the initialization $\mu_0 \in \defttha{\frac1\alpha, \frac{6L}{\alpha}}$ so that 
	Corollary \ref{cor:local-defective-th-proximal-sampler} ensures that this remains true for $\mu_0 \tond*{\pssem{h}}^N$ for every $N\ge 0$.
	Also, following \cite{ped-sal-2025-bis},  we introduce the quantity
	\begin{eqnarray*}
		\hatcpi{\pi}  \coloneqq  1+\frac{\cpi{\pi}}{h},
	\end{eqnarray*}
	which plays the same role as $\cpi{\pi}$ did for the Langevin dynamics.

	\begin{theorem}
		Under Assumption \ref{ass:log-lip-pert-log-conc} for $\alpha>0$, consider a probability measure $\mu_0 \in \defttha{\frac1\alpha, \frac{6L}{\alpha}}$  as initialization for the Proximal Sampler, and fix $\eps\in (0,\frac12)$. Then, 
		\begin{equation}
			\wmixa{\mu_0,\eps}  \lesiep \hatcpi{\pi}\tond*{1+\frac{L^2(1+\alpha h)}{\alpha} }+\sqrt{\frac{\hatcpi{\pi}(1+\alpha h)}{h}\tond*{\frac{1}{\alpha}+\frac{L^2}{\alpha^2}}}.
		\end{equation}
	\end{theorem}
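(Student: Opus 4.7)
The plan is to mimic the strategy used for the Langevin dynamics in Theorem \ref{thm:cutoff-LD-general}, but with the discrete-time building blocks tailored to the Proximal Sampler. Set $N_0 \coloneqq \tmix(\mu_0, 1-\eps)$. The first observation is that Corollary \ref{cor:local-defective-th-proximal-sampler} is designed precisely so that the pair $(J,S) = (\tfrac{1}{\alpha}, \tfrac{6L}{\alpha})$ is a fixed point of its update (a direct substitution gives $\tfrac{J}{(1+\alpha h)^{2N}} + \tfrac{1}{\alpha}(1-\tfrac{1}{(1+\alpha h)^{2N}}) = \tfrac{1}{\alpha}$ and analogously for $S$). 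Hence $\mu_{N_0} \in \defttha{\tfrac{1}{\alpha},\tfrac{6L}{\alpha}}$; also $\pi \in \defttha{\tfrac{1}{\alpha},\tfrac{2L}{\alpha}}$ as already noted in Section \ref{sec:def-loc-conc}. Applying Lemma \ref{lem:WTV} together with the fact $\totvar{\mu_{N_0}}{\pi}\le 1-\eps$ therefore yields
\begin{equation*}
	W_2(\mu_{N_0},\pi) \lesiep \sqrt{\tfrac{1}{\alpha}} + \tfrac{L}{\alpha}.
\end{equation*}

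Next, for any additional number of steps $N \ge 1$, Corollary \ref{cor:kl-w-reg-PS} applied to $(\mu_{N_0},\pi)$ in place of $(\mu,\nu)$ gives a bound on $\kldiv{\mu_{N_0+N}}{\pi}$. After expanding the square and using the trivial estimate $((1+\alpha h)^{N-1}-1)^2 \le (1+\alpha h)^{2N}-1$, the entropy bound organizes itself into a ``transient'' term that decays in $N$ and a ``floor'' term that captures the cost of the non-zero $M$. More precisely, one gets
\begin{equation*}
	\kldiv{\mu_{N_0+N}}{\pi} \lesiep \frac{(1+\alpha h)\bigl(1+\tfrac{L^2}{\alpha}\bigr)}{(1+\alpha h)^{2N}-1} + \frac{L^2(1+\alpha h)}{\alpha}.
\end{equation*}

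To turn this into a TV-mixing estimate one now invokes the standard entropy-to-TV mixing lemma for the Proximal Sampler used in \cite{ped-sal-2025-bis} (a straightforward variant of \cite[Lemma 7]{sal-2023}), which says $\wmixa{\mu_0,\eps} \lesiep N + \hatcpi{\pi}\bigl(\kldiv{\mu_{N_0+N}}{\pi}+1\bigr)$. Plugging in the entropy bound yields
\begin{equation*}
	\wmixa{\mu_0,\eps} \lesiep N + \hatcpi{\pi}\tond*{1+\tfrac{L^2(1+\alpha h)}{\alpha}} + \hatcpi{\pi}\cdot\frac{(1+\alpha h)\bigl(1+\tfrac{L^2}{\alpha}\bigr)}{(1+\alpha h)^{2N}-1}.
\end{equation*}
The final step is to optimize over $N$. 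In the regime $N\alpha h \lesssim 1$ one has $(1+\alpha h)^{2N}-1 \gtrsim N\alpha h$, so balancing the linear-in-$N$ term against the last term suggests the choice $N \asymp \sqrt{\tfrac{\hatcpi{\pi}(1+\alpha h)}{\alpha h}\bigl(1+\tfrac{L^2}{\alpha}\bigr)}$, which gives exactly the stated bound.

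The only real obstacle is the algebraic bookkeeping around Corollary \ref{cor:kl-w-reg-PS}: the two-case form with the threshold $W_2 \gtrless M\tfrac{K^{N-1}(1+K)}{1+K^{N-1}}$ must be controlled by the single looser estimate written there, and one must verify that the chosen $N$ is compatible with the small-$\alpha h$ regime used in the approximation $(1+\alpha h)^{2N}-1 \gtrsim N\alpha h$; if the optimal $N$ instead falls in the regime where the exponential is already large, the corresponding term is immediately negligible compared to the leading $\hatcpi{\pi}$ contribution, so the bound still holds.
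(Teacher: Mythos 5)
Your proof is correct and mirrors the paper's argument step for step: the fixed-point observation for $(J,S)=(\frac1\alpha,\frac{6L}{\alpha})$, Lemma \ref{lem:WTV}, Corollary \ref{cor:kl-w-reg-PS}, the discrete entropy-to-TV mixing lemma from \cite{ped-sal-2025-bis}, and the balanced choice of $N$. The only superfluous complication is the regime split at the end: $(1+\alpha h)^{2N}-1\ge 2N\alpha h$ holds for every $N\ge 1$ by Bernoulli's inequality, so no case analysis on the size of $N\alpha h$ is needed.
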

	
	\begin{proof}
	Write  $k_0 = \tmix\tond*{\mu_0, 1-\eps}$.
	Since $\mu_{k_0}, \pi \in \defttha{\frac1\alpha, \frac{6L}{\alpha}}$, 
	 by Lemma \ref{lem:WTV} we deduce that 
	\[
	W_2\tond*{\mu_{k_0},\pi} \lesiep \sqrt{\frac{1}{\alpha}}+\frac{L}{\alpha}.
	\]
	By Corollary \ref{cor:kl-w-reg-PS} we deduce that for any $n\in \N_{\ge 1}$
	\[
	\kldiv{\mu_{k_0+n}}{\pi} \lesiep 
	\frac{1+\alpha h}{nh} \tond*{{\frac{1}{\alpha}}+\frac{L^2}{\alpha^2}}+\frac{L^2(1+\alpha h)}{\alpha}
	\]
    where we have used that $(1+\alpha h)^{2n}\ge 1+ 2n\alpha h$.
	Recalling \cite[Lemma 2.5]{ped-sal-2025-bis}, it follows that
	\begin{align} \label{eq:ld-wmix-to-optimize-bound-ps}
			\wmixa{\mu_0,\eps} & \lesiep  n+
		\hatcpi{\pi}\quadr*{1+\frac{1+\alpha h}{nh} \tond*{{\frac{1}{\alpha}}+\frac{L^2}{\alpha^2}}+\frac{L^2(1+\alpha h)}{\alpha}}
		\\
		\label{eq:ld-wmix-to-optimize-bound-ps-bis}
		& \lesiep n+\hatcpi{\pi}\tond*{1+\frac{L^2(1+\alpha h)}{\alpha} }+\frac{\hatcpi{\pi}(1+\alpha h)}{nh}\tond*{\frac{1}{\alpha}+\frac{L^2}{\alpha^2}}.
	\end{align}
	Choosing $n = \ceil*{\sqrt{\frac{\hatcpi{\pi}(1+\alpha h)}{h}\tond*{\frac{1}{\alpha}+\frac{L^2}{\alpha^2}}}}$ yields
    \[
       	\wmixa{\mu_0,\eps} \lesiep \hatcpi{\pi}\tond*{1+\frac{L^2(1+\alpha h)}{\alpha} }+\sqrt{\frac{\hatcpi{\pi}(1+\alpha h)}{h}\tond*{\frac{1}{\alpha}+\frac{L^2}{\alpha^2}}}.
    \]
	\end{proof}

	\subsection*{Acknowledgments}
	The author thanks Sam Power, Justin Salez, and Peter Whalley for useful comments and references.

	\printbibliography
    \appendix

    \newpage
	
	\section{Technical details }

    \begin{lemma}\label{lem:recursive-problem}
		For $N \in \N_{\ge 1}$, $0\le K\le 1$ and $M,A\ge 0$, define
		\begin{equation}
			S(N,A) = \min_{\substack{0\le \eta_{n}\le 1 \\ \eta_{N-1} = 1}} \sum_{n=0}^{N-1} \eta_n^2 A_n^2
		\end{equation}
		where 
		\begin{align}
			A_0 &= A
			\\
			A_{n+1} & = K (1-\eta_n) A_n +M.
		\end{align}
		\begin{enumerate}
			\item If $K=1$, we have for $N\ge 1$ that
			\begin{equation}
				\begin{split}
					S(N,A) &= \begin{cases}
						\frac{1}{N}\tond*{A+(N-1)M}^2 \quad &\text{ if } A\ge M
						\\
						A^2 + (N-1)M^2 \quad &\text{ if } A< M
					\end{cases}
					\\
					& \leq \frac{2}{N}\tond*{A+(N-1)M}^2.
				\end{split}
			\end{equation}
			\item If $0\le K<1$, we have that $S(1,A)=A^2$ and for $N\ge 2$ 
			\begin{equation}
				\begin{split}
					S(N,A) &= \begin{cases}
						\frac{1-K^2}{1-K^{2N}} \tond*{K^{N-1}A+M\frac{1-K^{N-1}}{1-K}}^2 \quad &\text{ if } A\ge M\frac{K^{N-1}(1+K)}{1+K^{N-1}}
						\\
						A^2 + M^2 \frac{\tond*{1-K^{N-1}}^2(1-K^2)}{\tond*{1-K^{2N-2}}(1-K)^2}\quad &\text{ if } A< M\frac{K^{N-1}(1+K)}{1+K^{N-1}}
					\end{cases}
					\\
					& \leq 2\frac{1-K^2}{1-K^{2N}} \tond*{K^{N-1}A+M\frac{1-K^{N-1}}{1-K}}^2.
				\end{split}
			\end{equation}
		\end{enumerate}
	\end{lemma}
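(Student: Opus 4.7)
The plan is to prove Lemma \ref{lem:recursive-problem} by induction on $N$, using the Bellman-equation structure of the optimization. The base case $N=1$ is immediate: the constraint $\eta_{N-1}=1$ forces $\eta_0 = 1$, giving $S(1,A) = A^2$, and both claimed formulas reduce to $A^2$ at $N=1$ since the factor $1-K^{N-1}$ vanishes. For $N \ge 2$ the dynamic-programming decomposition reads
\[
S(N, A) \;=\; \min_{\eta_0 \in [0,1]} \Bigl[ \eta_0^2 A^2 + S\bigl(N-1,\, K(1-\eta_0)A + M\bigr) \Bigr].
\]

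The crucial observation that makes the induction clean is that the argument $A' := K(1-\eta_0)A + M$ always satisfies $A' \ge M$, and $M$ itself always lies in the \emph{large} regime for the $(N-1)$-problem: indeed $M \ge MK^{N-2}(1+K)/(1+K^{N-2})$ rearranges to $K^{N-1} \le 1$. Hence I can always substitute the large-regime formula from the inductive hypothesis and simplify via the identity
\[
K^{N-2}A' + M\,\tfrac{1-K^{N-2}}{1-K} \;=\; K^{N-1}(1-\eta_0)\,A + M\,\tfrac{1-K^{N-1}}{1-K},
\]
reducing the Bellman equation to the one-dimensional convex program of minimizing $\eta_0^2 A^2 + \gamma\bigl((1-\eta_0)P + Q\bigr)^2$ over $\eta_0 \in [0,1]$, with $P := K^{N-1}A$, $Q := M(1-K^{N-1})/(1-K)$, and $\gamma := (1-K^2)/(1-K^{2N-2})$ (replaced by $1/(N-1)$ when $K=1$). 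Setting the derivative to zero yields the unique unconstrained stationary point $\eta_0^\star = (1-K^2)K^{N-1}(P+Q)/[A(1-K^{2N})]$; a short algebraic manipulation shows that $\eta_0^\star \le 1$ is precisely equivalent to $A \ge MK^{N-1}(1+K)/(1+K^{N-1})$. When this holds, plugging $\eta_0^\star$ back into the objective (using the first-order condition to collapse cross terms) gives the large-regime value $(1-K^2)(P+Q)^2/(1-K^{2N})$; when it fails, convexity of the objective forces the constrained minimum at $\eta_0 = 1$, producing $A^2 + \gamma Q^2$, which is precisely the small-regime expression.

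The factor-$2$ upper bounds follow by a short verification: in the large regime the main term equals $S(N,A)$ exactly, so the bound is trivial. In the small regime, $h(A) := S(N,A) - 2\cdot\text{(main term)}$ is a quadratic in $A$ with second derivative $2(1+K^{2N}-2K^{2N-2})/(1-K^{2N})$, which is non-negative by the elementary inequality $1 + K^{2N} \ge 2K^{2N-2}$ (itself a consequence of AM--GM together with $K^N \ge K^{2N-2}$, valid for $N \ge 2$ and $K \in (0,1]$). Since $h$ is non-positive at the threshold $A = MK^{N-1}(1+K)/(1+K^{N-1})$ (where the small and large formulas coincide) and at $A = 0$ (again reducing to the same elementary inequality), convexity gives $h \le 0$ throughout the small-regime interval. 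The $K=1$ branch can either be proved by the same calculation with $\sum_{m=0}^{N-1}K^{2m}$ replaced by $N$, or by passing to the limit $K \to 1^-$ in the $K<1$ formulas using continuity. I do not anticipate any conceptual obstacle: the only nontrivial step is the observation that the Bellman recursion automatically remains in the large regime at each level, which dispenses with what would otherwise be a four-way case analysis; once this is noted, the remainder is single-variable calculus and algebraic bookkeeping.
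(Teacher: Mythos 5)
Your proposal is correct and takes essentially the same route as the paper: the same Bellman recursion, the same key observation that the recursive argument $K(1-\eta_0)A+M\ge M$ always lands in the large regime (so the four-way case split collapses), the same one-variable quadratic optimization giving the same stationary point, and the same endpoint-plus-concavity (equivalently, convexity of your $h=-f$) argument for the factor-$2$ bound, down to the reduction to the inequality $1+K^{2N}\ge 2K^{2N-2}$. The only cosmetic differences are that the paper treats $K=1$ and $0<K<1$ as separate cases rather than unifying or passing to the limit, and it works with $f=-h$; the substance is identical.
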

	\begin{proof}
		The proof is by induction; the case $N=1$ is trivial.
		If $N\ge 2$, using 
		the definition of $A_n$ we see that 
		\begin{equation}\label{eq:recursive-step-optimization}
			\begin{split}
				S(N,A) &= \min_{0\le \eta_0 \le 1} \eta_0^2 A^2 + S(N-1, K(1-\eta_0) A+M).
			\end{split}
		\end{equation}
		We now distinguish between different cases, and assume that $A>0$
        (the case $A=0$ is easy noticing that $S(N,0) =  S (N-1,M)$).
		\begin{enumerate}
			\item 	\textbf{Case 1:} $K=1$. 
			
			Notice that for any $0\le \eta_0 \le 1$ we have $(1-\eta_0) A+M \ge M$. Hence, using the inductive hypothesis 
			\[
			S(N,A) = \min_{0\le \eta_0 \le 1} \eta_0^2 A^2 + \frac{1}{N-1}\tond*{(1-\eta_0)A+(N-1)M}^2.
			\]
			If $A\ge M$, the above is minimized for the value
			\[
			\eta_ 0 \coloneqq \frac{A+(N-1)M}{NA} \in [0,1],
			\]
			and the first bound follows.
			Otherwise, the minimum is achieved for $\eta_0 =1$, and the other bound easily follows.
			 To prove the final inequality valid in both regimes, it suffices to show that 
            \[
                f(A) \coloneqq \frac{2}{N}\tond*{A+(N-1)M}^2 - A^2 - (N-1)M^2  \ge 0 
            \]
            if $0 \le A\le M$. This is easily done, since for $N\ge 2$ the function $f$ is quadratic with non-positive coefficient for $A^2$, and so it suffices to check the claim for $A=0$ and $A =M$ (and separately for $N=1$).
			
			\item \textbf{Case 2:} $0\le K<1$.
			
			Observe first that for any $0 \le \eta_0\le 1$ we have that 
			\[
			K(1-\eta_0)A + M \ge M \ge M\frac{K^{N-2}(1+K)}{1+K^{N-2}}
			\]
            since $0\le K<1$.
			Hence, by the inductive hypothesis,
			\[
			\begin{split}
				S(N,A) & = \min_{0\le \eta_0 \le 1} \eta_0^2 A^2 + S(N-1, K(1-\eta_0) A+M).
				\\
				& = \min_{0\le \eta_0 \le 1} \eta_0^2 A^2 +\frac{1-K^2}{1-K^{2(N-1)}} \tond*{K^{N-2}\tond*{K(1-\eta_0)A+M}+M\frac{1-K^{N-2}}{1-K}}^2
				\\
				& = \min_{0\le \eta_0 \le 1} \eta_0^2 A^2 +\frac{1-K^2}{1-K^{2(N-1)}} \tond*{K^{N-1}(1-\eta_0)A+M\frac{1-K^{N-1}}{1-K}}^2.
			\end{split}
			\]
			The minimum for the corresponding unconstrained problem is achieved for 
			\[
			\eta_0 \coloneqq \frac{(1-K^2)K^{N-1}}{A\tond*{1-K^{2N}}}\tond*{K^{N-1}A+M \frac{1-K^{N-1}}{1-K}}.
			\]
			This is admissible (i.e. lies in $[0,1]$) if and only if
			\[
			A\ge M\frac{K^{N-1}(1+K)}{1+K^{N-1}},
			\]
			in which case we establish the bound for the first regime after substituting this value for $\eta_0$. 
			In the opposite case, the minimum is achieved for $\eta_0 = 1$, and we obtain the bound in the second regime.
            To prove the final inequality, it suffices to show that 
            \[
                f(A) \coloneqq 2\frac{1-K^2}{1-K^{2N}} \tond*{K^{N-1}A+M\frac{1-K^{N-1}}{1-K}}^2 - A^2 - M^2 \frac{\tond*{1-K^{N-1}}^2(1-K^2)}{\tond*{1-K^{2N-2}}(1-K)^2} \ge 0 
            \]
            for $0 \le A\le M\frac{K^{N-1}(1+K)}{1+K^{N-1}}$. This is easily done, since for $N\ge 2$ the function $f$ is quadratic with non-positive coefficient for $A^2$, and so it suffices to check the claim for $A=0$ and $A =M\frac{K^{N-1}(1+K)}{1+K^{N-1}}$ (and separately for $N=1$).
		\end{enumerate}

	\end{proof}

\end{document}